\newtheorem{theorem}{Theorem}[section]
\newtheorem{lemma}[theorem]{Lemma}
\newtheorem{proposition}[theorem]{Proposition}
\theoremstyle{remark}
\newtheorem{definition}[theorem]{Definition}
\newtheorem{example}[theorem]{Example}
\newtheorem{remark}[theorem]{Remark}
\newcommand{\C}{\mathbb{C}}
\newcommand{\Q}{\mathbb{Q}}
\newcommand{\R}{\mathbb{R}}
\newcommand{\Z}{\mathbb{Z}}
\newcommand{\Sp}{\mathbb{S}}
\newcommand{\Imt}{\operatorname{Im}}
\newcommand{\co}{\colon\thinspace}
\newcommand{\T}{\mathbb{T}}
\newcommand{\Pb}{\mathbb{P}}
\begin{document}

\title{Functional Equations for Orbifold Wreath Products}

\author{Carla Farsi}
\address{Department of Mathematics,
University of Colorado at Boulder,
Campus Box 395,
Boulder, CO 80309-0395}
\email{farsi@euclid.colorado.edu}

\author{Christopher Seaton}
\address{Department of Mathematics and Computer Science,
Rhodes College,
2000 N. Parkway,
Memphis, TN 38112}
\email{seatonc@rhodes.edu}

\subjclass[2010]{Primary 57S15, 55S15; Secondary 55N91, 05A15}

\keywords{Orbifold, wreath product, wreath symmetric product,
Euler--Satake characteristic,
orbifold Euler characteristic}

\begin{abstract}
We present generating functions for extensions of multiplicative invariants of
wreath symmetric products of orbifolds presented as the quotient by the locally
free action of a compact, connected Lie group in terms of orbifold sector decompositions.
Particularly interesting instances of these product formulas occur for the Euler and
Euler--Satake characteristics, which we compute for a class of weighted projective spaces.
This generalizes results known for global quotients by
finite groups to all closed, effective orbifolds.  We also describe a combinatorial
approach to extensions of multiplicative invariants using decomposable functors that recovers
the formula for the Euler--Satake characteristic of a wreath product of a global quotient orbifold.
\end{abstract}

\maketitle


\section{Introduction}
\label{sec-intro}

If $M$ is a manifold, the $n$th \emph{symmetric product} of $M$ is the quotient of
the Cartesian product $M^n$ by the action of the symmetric group by permuting factors.
Characteristic numbers of symmetric products of manifolds have been widely studied,
and their structure naturally leads to generating functions of the
infinite product and exponential type, see e.g. \cite{hh, oh, dmvv}.
If $M$ is in addition equipped with the action of a finite group
so that $M/G$ is a global quotient orbifold, the \emph{wreath symmetric product}
is the natural generalization of the symmetric product.  In the literature one can find several approaches
to characteristic numbers of wreath symmetric products, see e.g. \cite{tamanoi1, tamanoi2, tamanoi3, wang, wangZhou}.
Many examples of orbifolds, however, are not global quotients of a manifold by a finite group.
The most well-known examples are the weighted complex projective spaces, see e.g.
\cite{kawasaki, planes, bfnr, holm, jiang}.  Characteristic numbers of wreath symmetric products for
non-global quotient orbifolds have been studied e.g. in
\cite{boli, hama, hao, farsiseaton2, farsiseaton3}.

In \cite{tamanoi1} and \cite{tamanoi2}, Tamanoi introduced a number
of orbifold invariants for global quotient orbifolds, i.e. orbifolds given by
the quotient of a manifold by a finite group, generalizing the
orbifold Euler characteristics of \cite{bryanfulman} and \cite{dixon}.
The basic idea behind these invariants is to
apply a multiplicative orbifold invariant $\varphi$, e.g. the
Euler--Satake characteristic (see \cite{farsiseaton3}), to a $\Gamma$-sector decomposition of the
orbifold, yielding an extension $\varphi_{\Gamma}$ of this invariant. Tamanoi
introduced sector decompositions of global quotients associated to
an arbitrary group $\Gamma$, a $\Gamma$-set $X$, and a finite
covering space $\Sigma^\prime \to \Sigma$ of a connected manifold
$\Sigma$ with fundamental group $\Gamma$. See also \cite{tamanoi3} for connections between these
extensions and the orbifold elliptic genus, which for non global quotient orbifolds was
introduced in \cite{dlm}.

In \cite{farsiseaton1}, for a finitely generated discrete group
$\Gamma$, the authors introduced  the $\Gamma$-sectors associated
to a Lie groupoid $\mathcal{G}$, which generalized Tamanoi's
$\Gamma$-sector decomposition to the case of an arbitrary orbifold.
The relationship between this construction, quotient presentations
of orbifolds, and generalized loop spaces for orbifolds was studied
in \cite{farsiseaton2}.  In \cite{farsiseaton3}, this relationship
was used to extend Tamanoi's generating functions for the extension
by free abelian groups $\Gamma = \Z^{\ell}$ of the Euler and Euler--Satake
characteristics of wreath symmetric product orbifolds to the case of
closed effective orbifolds by using their presentation
as the quotient of a closed manifold by the locally free action
of a compact, connected Lie group \cite[Theorem 2.19]{moerdijkmrcun}.
See also \cite{wang} for the generating function for the stringy orbifold
Euler characteristic, corresponding to $\Gamma = \Z^2$.

In this paper, we  generalize Tamanoi's generating function
for the $\Gamma$-extension $\varphi_{\Gamma}$ of a
wreath product of a multiplicative invariant $\varphi$
to the case of quotients by compact, connected Lie groups acting locally
freely; see Theorem \ref{th-GeneratingFunctionGammaExtensionsFuncEqnNewTheo}.
This generalizes the generating functions given in \cite{farsiseaton3}
from the case where $\varphi$ is the Euler or Euler--Satake
characteristic and $\Gamma = \Z^\ell$ to general $\varphi$
and an arbitrary finitely generated discrete group $\Gamma$.
For specific multiplicative invariants $\varphi$, these formulas relate the values of an
extension $\varphi_{\Gamma}$ of $\varphi$ on the wreath symmetric products
$ M^n \rtimes G(\mathcal{S}_n)$ to  values of extensions $\varphi_{H}$  of $\varphi$ on $M\rtimes G$; see Theorems
\ref{th-GenerFunctGammaExtensStandardEulerCharacteristic} and
\ref{th-GenerFunctGammaExtensEulerSatakeCharacteristic}.
These expansion formulas admit both an infinite product and an exponential form.
For a geometric interpretation, note that if $\Sigma$ is a manifold with fundamental groups $\Gamma$,
then each factor in the  infinite product corresponds to a connected covering space
associated to a finite index subgroup $H$ of $\Gamma$.
This generalization requires defining sector decompositions
associated to $\Gamma$-sets in this generality.
We illustrate these results by calculating the $\Gamma$-Euler and $\Gamma$-Euler--Satake
characteristics, where $\Gamma$ is the fundamental group a closed,
orientable surface of positive genus,
of wreath symmetric products of an interesting class
of orbifold examples that are not global quotients: weighted projective spaces with weights $(m, mn, n)$
where $m$ and $n$ are relatively prime.

In the last section, emphasizing the exponential form of the wreath product expansions,
we introduce for global quotient orbifolds a modification of methods of Dress and M\"uller
\cite{dressmuller} for decomposable functors  to
relate the $\Gamma$-extension of the Euler--Satake characteristic
and the $(\Gamma/H)$-extensions, see Theorem \ref{th-DressMuellerGen}.
Indeed their methods  provide a counting algorithm for invariants of exponential type,
and using this method, we recover Theorem \ref{th-GenerFunctGammaExtensEulerSatakeCharacteristic}
for global quotients.  Late in the preparation of this paper, the authors became aware of
\cite{dressmullerII}, from which the results of this section also follow by choosing the
Euler--Satake characteristic as a weight.  The method of proof is similar to those in
\cite{dressmullerII}.

The outline of this paper is as follows.  In Section \ref{sec-GammaGBundlTheClassifCrosWrProd},
we collect background material on $K$-$G$-bundles for groups $K$ and $G$
and review the classifications given in \cite{tamanoi2}.
Wreath products appear naturally in the geometric context of $K$-$G$-bundles, which are hence
a standard tool to study homomorphisms into wreath products.
In Section \ref{sec-FunctionalEquationsforQuotOrbifoldWreathProducts},
we generalize Tamanoi's generating functions for $\Gamma$-extensions of
multiplicative invariants of wreath symmetric products to orbifolds
presented as quotients by compact, connected Lie groups acting
locally freely; see Theorem
\ref{th-GeneratingFunctionGammaExtensionsFuncEqnNewTheo}.
This provides a geometric context for the previous section.
We apply these results to the Euler and Euler--Satake characteristics in
Subsection \ref{subsec-ExamplesTamanoiFunctEqautionWreathProducts},
resulting in generating functions for these invariants given in
Theorems \ref{th-GenerFunctGammaExtensStandardEulerCharacteristic}
and \ref{th-GenerFunctGammaExtensEulerSatakeCharacteristic}.
We illustrate Theorems \ref{th-GenerFunctGammaExtensStandardEulerCharacteristic}
and \ref{th-GenerFunctGammaExtensEulerSatakeCharacteristic} in
Section \ref{subsec-wrExamplesExpansionExamplesIIItheHinertia}
for the class of non-global quotient orbifolds given by weighted projective spaces with weights $(m, mn, n)$
with $n>1$ and $m>1$ relatively prime and $\Gamma$
the fundamental group of a closed orientable two surface of positive genus;
the standard non-global quotient
$2$-dimensional teardrops $\Pb(1, n)$ and  $\Pb(1, m)$ appear as sectors of these spaces.
We then study extensions of invariants associated to
arbitrary finite $\Gamma$-sets.  In Section
\ref{sec-DecomposableFunctorsAndWreathProducts},
we recover  Theorem \ref{th-GenerFunctGammaExtensEulerSatakeCharacteristic}
using a formal functional equation of Dress and M\"uller
\cite{dressmuller} for decomposable functors.

By a \emph{quotient orbifold}, we mean an orbifold that admits a presentation
as a translation groupoid $M \rtimes G$ where $M$ is a smooth manifold and $G$ is a Lie group acting
locally freely in such a way that $M \rtimes G$ is Morita equivalent to an
orbifold groupoid, see \cite{ademleidaruan}.  For brevity, we refer to $M \rtimes G$ as a
\emph{cc-presentation} when in addition $G$ is compact and connected, $M$
is closed, and the action of $G$ on $M$ is effective.  In particular, note that an orbifold
that admits a cc-presentation is compact and does not have boundary in the orbifold sense.
All manifolds, orbifolds, and group actions are assumed smooth.
We use $\chi$ to denote the (usual) Euler characteristic of the orbit space
and $\chi_{ES}$ to denote the Euler--Satake characteristic, see \cite{farsiseaton3}.
Unless stated otherwise, we will always use $M$ to denote a smooth, closed manifold,
$G$ to denote a compact Lie group, and $\Gamma$ to denote a finitely
generated discrete group.


\section{Classifications of $K$-$G$-Bundles and Conjugacy Classes of Homomorphisms}
\label{sec-GammaGBundlTheClassifCrosWrProd}

In this section, we review results on the classifications of $K$-$G$-bundles
parametrized by conjugacy classes of homomorphisms.  We assume
that $G$ is a compact Lie group and $K$ is a topological group.

\begin{definition}[\cite{hambletonhausmann, lashof1, tamanoi2}]
\label{def-GammaGBundleGgroupCase}
Let $X$ be a topological space.
\begin{itemize}
\item[({\it i})]
A $K$-$G$-bundle over $X$ is a locally trivial
$G$-bundle $p\co P \to X$ with left $K$-actions on $P$ and
$X$ such that the projection map $p$ is $K$-equivariant.
\item[({\it ii})]
A $K$-$G$-principal bundle over $X$ is a locally trivial
principal $G$-bundle $p \co P \to X$ that is also a $K$-$G$-bundle
such that
\[
    \gamma (eg) = (\gamma e) g
    \;\;\;
    \forall \gamma \in K, e\in P, g \in G.
\]
In particular, as $P$ is a principal $G$-bundle, $G$ acts on $P$ on the right.
\end{itemize}
Morphisms of $K$-$G$-bundles and $K$-$G$-principal bundles
are bundle morphisms, respectively principal bundle morphisms,
that are $K$-equivariant.  For a given $K$-$G$-bundle or
$K$-$G$-principal bundle $P$,
we let $Aut_{K,\,G}^P$ denote its automorphism group.

A $K$-$G$-bundle or $K$-$G$-principal bundle is \emph{trivial}
when it is a product.  We refer to a $K$-$G$-bundle $P$ as a
\emph{$K$-irreducible $G$-bundle} when
the $K$-action on $X$ is transitive; similarly,
a \emph{$K$-irreducible $G$-principal bundle} is a
$K$-$G$-principal bundle where $K$ acts transitively on $X$.
\end{definition}

By the associated principal bundle construction, every $K$-$G$-bundle
over $X$ induces a $K$-$G$-principal bundle over $X$.
When $K$ and $G$ are compact Lie
and $X$ is completely regular, every
$K$-$G$-bundle over $X$ is locally
trivial by \cite[Corollary 1.5]{lashof1}.  The same holds true if
the bundle is smooth  \cite[Corollary 1.6]{lashof1}.

By \cite[Lemma 3-3 and Lemma 4-1]{tamanoi2},
wreath products occur as the group of $G$-bundle automorphisms of trivial
$G$-bundles over finite sets, and centralizers of homomorphisms into wreath products
occur as the group of $K$-$G$-automorphisms of
$K$-$G$-bundles over finite sets.  In particular, we have the following
generalization of \cite[Lemma 3-3]{tamanoi2}.

\begin{proposition}
\label{prop-WreathPdctIsAutomGp}
The automorphism group of the trivial $G$-bundle
$X \times G \to X$ over a discrete space $X$ is equal to $Map(X, G)\rtimes K$,
where $K$ is the permutation group of $X$ and the $K$-action is given by
\[
    k f(x) = f(k^{-1} x).
\]
\end{proposition}

\begin{proof}
By \cite[Chapter 5, Theorem 1.1]{husemoller}, the group of automorphisms of $X \times G$
as a principal $G$-bundle that restrict to the identity on $X$ is given by
$Map(X, G)$.  Then it is straightforward to check that
every general automorphism is determined by an element of $Map(X, G)$
and a homeomorphism of $X$.
\end{proof}

\begin{remark}
\label{rem-SpecialCaseXisN}
In the case $X = {\mathbf n} = \{ 1, 2, \ldots, n \}$, $K$
is the symmetric group $\mathcal{S}_n$ and
we obtain the standard wreath product
$G(\mathcal{S}_n)$; see \cite[Lemma 3-3]{tamanoi2}.
That is, $G(\mathcal{S}_n)$ is the semidirect product of $G^n$ by the
action of $\mathcal{S}_n$ by permuting factors, so that
the operation is given by
\[
    ((g_1, \ldots, g_n),s)((h_1, \ldots, h_n), t)
    =
    ((g_1h_{s^{-1}(1)}, \ldots, g_n h_{s^{-1}(n)}), st)
\]
for $(g_1, \ldots, g_n), (h_1, \ldots, h_n) \in G^n$ and $s, t \in \mathcal{S}_n$.

\end{remark}

The $K$-$G$-principal bundles over a finite set $X$ of order $n$ are necessarily trivial,
and are classified by conjugacy classes of homomorphisms $\theta\co K \to G(\mathcal{S}_n)$.
Similarly, the $K$-irreducible $G$-principal bundles over $X$ are classified by conjugacy
classes of homomorphisms, as explained by the following.

\begin{theorem}[\cite{tamanoi2}]
\label{th-CrossedPdctBundleChr}
Let $K$ and $G$ be any groups, and let $X$ be a finite set of order $n$.
\begin{itemize}
\item[({\it i})]
There is a bijective correspondence between the sets
\[
    \left\{\hbox{isomorphism classes of
    $K$-$G$-principal bundles over $X$ } \right\}
\]
and
\[
    \hbox{HOM}\left(K, G(\mathcal{S}_n)\right)
    /G(\mathcal{S}_n).
\]
\item[({\it ii})]
There is a bijective correspondence between the sets
\[
    \left\{\hbox{isomorphism classes of
    $K$-irreducible $G$-principal bundles over $X$ } \right\}
\]
and
\[
    \bigsqcup\limits_{(H_n)}
    \hbox{HOM}(H_n, G)/(N_K(H_n)\times G)
\]
where the union is over $K$-conjugacy classes of subgroups $H_n \leq K$ of index $n$,
and the $N_K(H_n)\times G$-action is given by
\begin{equation}
\label{eq-NormTimesGAction}
    (\rho(u,g))(h) \co = g^{-1} \rho(u\, h \, u^{-1})g
\end{equation}
for $(u,g) \in N_K(H_n) \times G$.
\end{itemize}
\end{theorem}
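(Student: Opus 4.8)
The plan is to reduce both statements to the computation of the automorphism group of a trivial $G$--bundle over a finite set, which by Proposition~\ref{prop-WreathPdctIsAutomGp} and Remark~\ref{rem-SpecialCaseXisN} is the wreath product $G(\mathcal{S}_n)$.

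For statement~(i): since $X$ is finite and discrete, the underlying principal $G$--bundle of any $K$--$G$--principal bundle over $X$ is trivial, so after choosing a trivialization and an identification $X \cong {\mathbf n}$ we may take the total space to be ${\mathbf n} \times G$ with its standard right $G$--action. Endowing ${\mathbf n} \times G$ with the structure of a $K$--$G$--principal bundle then amounts exactly to a left $K$--action on ${\mathbf n} \times G$ by principal $G$--bundle automorphisms (not necessarily covering the identity on ${\mathbf n}$), that is, to a homomorphism $\theta \co K \to G(\mathcal{S}_n)$. Two such bundles are isomorphic as $K$--$G$--principal bundles precisely when $\theta$ and $\theta^\prime$ differ by conjugation by an element of $G(\mathcal{S}_n)$, and rechoosing the trivialization or the identification $X \cong {\mathbf n}$ likewise only conjugates $\theta$ by an element of $G(\mathcal{S}_n)$. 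This yields the bijection with $\hbox{HOM}(K, G(\mathcal{S}_n))/G(\mathcal{S}_n)$.

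For statement~(ii): $K$--irreducibility means $K$ acts transitively on $X$. Fix $x_0 \in X$ and put $H_n := \mathrm{Stab}_K(x_0)$; the orbit--stabilizer theorem identifies $X$ with the $K$--set $K/H_n$ and gives $[K : H_n] = n$, while conversely every index-$n$ subgroup so arises and $K/H_n \cong K/H_n^\prime$ as $K$--sets if and only if $H_n$ and $H_n^\prime$ are $K$--conjugate. So it suffices to classify, for each $K$--conjugacy class of index-$n$ subgroups $H_n$, the $K$--$G$--principal bundles $P$ over $X = K/H_n$. I would do this by restricting to the fiber $P_{x_0}$ over $x_0$: it is a principal homogeneous $G$--space on which $H_n$ acts by $G$--equivariant automorphisms, and a choice of point $e_0 \in P_{x_0}$ both trivializes $P_{x_0} \cong G$ and encodes the $H_n$--action as a homomorphism $\rho \co H_n \to G$ via $h \cdot e_0 = e_0 \, \rho(h)$; conversely $\rho$ reconstructs $P$ as the associated bundle $K \times_{H_n} G \to K/H_n$ (with $H_n$ acting on $G$ through $\rho$ and left translation), and these passages are mutually inverse up to $K$--$G$--isomorphism. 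The remaining freedom is the choice of $e_0$, which replaces $\rho$ by $g^{-1}\rho(\,\cdot\,)g$ for some $g \in G$, and the choice of a basepoint $u x_0$ with $u \in N_K(H_n)$ having the same stabilizer $H_n$, together with the point $u e_0$ of its fiber, which precomposes $\rho$ with the conjugation $h \mapsto u^{-1} h u$ of $H_n$. Combining these identifies the isomorphism class of $P$ with the orbit of $\rho$ under the $N_K(H_n) \times G$--action \eqref{eq-NormTimesGAction} (up to replacing $u$ by $u^{-1}$, which does not change orbits), yielding the disjoint union over $K$--conjugacy classes of index-$n$ subgroups.

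The routine part is~(i); the main obstacle is the bookkeeping in~(ii). One must verify that $P \mapsto \rho$ is well defined on isomorphism classes, that $P \mapsto \rho \mapsto K \times_{H_n} G$ recovers $P$, and especially that the residual ambiguity in $\rho$ is exactly \eqref{eq-NormTimesGAction} --- which requires simultaneously tracking the rechoice of basepoint within $N_K(H_n) \cdot x_0$ and the rechoice of fiber trivialization --- and finally that no further identifications occur between bundles built from non-conjugate subgroups, which is immediate since they have non-isomorphic underlying $K$--sets.
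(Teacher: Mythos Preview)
Your argument is correct and is essentially the standard proof. Note, however, that the paper itself does not prove this theorem: it is stated with a citation to \cite{tamanoi2}, and in Remark~\ref{rem-GammaGbundlesClasfSpecialCaseXisNAutos} the authors merely observe that for $K$ compact Lie it can also be read off from Lashof's classification (Proposition~\ref{prop-GammaGbundlesClassification}) by specializing to a finite discrete $X$. Your direct approach---trivialize the underlying $G$--bundle, identify the $K$--action with a homomorphism into $Aut_G(\mathbf{n}\times G) = G(\mathcal{S}_n)$, and for the irreducible case restrict to the fiber over a basepoint to extract $\rho\co H_n\to G$ and then track the basepoint/trivialization ambiguity---is exactly the argument Tamanoi gives in \cite{tamanoi2}, and it has the advantage of working for arbitrary $K$ rather than only compact Lie groups, which is the generality the paper actually needs.
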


For details on the correspondence described in ({\it i}) see  \cite[pp. 812--814]{tamanoi2}, and for details on
the correspondence in ({\it ii}), see \cite[pp. 815--816]{tamanoi2}.

By \cite[Lemma 4-1]{tamanoi2}, if $\theta\co K \to G(\mathcal{S}_n)$ is a homomorphism
whose $G(\mathcal{S}_n)$-conjugacy class $(\theta)$ corresponds to the isomorphism class of the
$K$-$G$-bundle $P_\theta$, then
\begin{equation}
\label{eq-CG=Aut}
    C_{G({\mathcal S}_n)}(\theta)\cong Aut_{K,\, G}^{P_{\theta}}.
\end{equation}
Note that if $\rho\co H \to G$ is a homomorphism, we use $(\rho)$ to
denote the $G$-conjugacy class of $\rho$ and $[\rho]$ to denote the
$N_K(H)\times G$-conjugacy class of $\rho$.


\section{Functional Equations for Quotient Orbifold Wreath Symmetric Products}
\label{sec-FunctionalEquationsforQuotOrbifoldWreathProducts}


\subsection{Generating Functional Equation for $\Gamma$-Extensions}
\label{subsec-generalTamanoiFunctEqautionWreathProducts}

In this section, we generalize the generating functions of extensions of
multiplicative invariants for wreath symmetric products in
\cite{tamanoi2} to the case of orbifolds that admit a cc-presentation.
In particular, Theorem \ref{th-GeneratingFunctionGammaExtensionsFuncEqnNewTheo}
corresponds to \cite[Proposition 5-4]{tamanoi2}.
For specific choices of $\Gamma$ and $\varphi$,
the formula in Theorem \ref{th-GeneratingFunctionGammaExtensionsFuncEqnNewTheo}
specializes to particularly interesting examples; see
Section \ref{subsec-ExamplesTamanoiFunctEqautionWreathProducts}.

By a \emph{multiplicative orbifold invariant}, we mean a
function $\varphi$ defined on a subclass of Morita equivalence classes of
orbifold groupoids such that
\[
    \varphi(\mathcal{G} \times \mathcal{H}) = \varphi(\mathcal{G})\varphi(\mathcal{H})
\]
where $\mathcal{G} \times \mathcal{H}$ is a product groupoid; see \cite[page 123]{moerdijkmrcun}.
Examples include the (usual) Euler characteristic $\chi$ of the orbit space
and the Euler--Satake characteristic $\chi_{ES}$, see \cite{farsiseaton3}.
We are particularly interested in multiplicative orbifold invariants defined for
all orbifolds that admit cc-presentations.  We restrict
to  the case that $\Gamma$ is a finitely generated discrete group to ensure
that these extensions are finite.

\begin{definition}
\label{def-multInvAssocInvariants}
Let $\varphi$ be a multiplicative orbifold invariant, and
let $\Gamma$ be a finitely generated discrete group.
\begin{itemize}
\item[({\it i})]
The \emph{$\Gamma$-extension $\varphi_\Gamma$ of $\varphi$}
        is defined by
\[
    \varphi_{\Gamma} \left(M \rtimes G \right) =
    \sum_{(\theta) \in HOM(\Gamma, G)/G}
    \varphi \left(M^{\langle\theta\rangle} \rtimes C_G(\theta) \right)
\]
where $(\theta)$ ranges over conjugacy classes of homomorphisms
from $\Gamma$ to $G$ and $\varphi \left(M^{\langle\theta\rangle} \rtimes C_G(\theta) \right)$
is taken to be zero when $M^{\langle\theta\rangle} = \emptyset$.

\item[({\it ii})]
Let $H \leq \Gamma$ be a subgroup of finite index,
and let $(\Gamma/H)$ denote the isomorphism class of $\Gamma/H$ as a $\Gamma$-set.
The $(\Gamma/H)$-extension of a multiplicative orbifold
invariant $\varphi$ is defined by
\[
    \varphi_{(\Gamma / H)} (M\rtimes G)= \sum_{[\rho] \in HOM(H, G)/(N_{\Gamma}(H) \times G)} \varphi
    \left(  M^{\langle \rho\rangle}\rtimes Aut^{P_{\rho}}_{\Gamma,\,G}  \right).
\]
Here, $[\rho]$ ranges over $(N_{\Gamma}(H) \times G)$-orbits of
homomorphisms from $H$ to $G$ where the action on $\rho \in HOM(H, G)$ is that
of Equation \eqref{eq-NormTimesGAction}.
As well, $p_{\rho}\co P_{\rho} = \Gamma
\times_{\rho} G \to \Gamma / H$ is a
$\Gamma$-irreducible $G$-principal bundle, and  $Aut^{P_{\rho}}_{\Gamma,\,G}$ is the
automorphism group of $P_{\rho}$ described in \cite[Theorem 4-4]{tamanoi2} and recalled below.
\end{itemize}
\end{definition}

If $M \rtimes G$ is a cc-presentation of the orbifold $Q$, it follows from
\cite[Theorem 3.5]{farsiseaton2} that
\[
    \bigsqcup\limits_{(\theta) \in HOM(\Gamma, G)/G} M^{\langle\theta\rangle} \rtimes C_G(\theta)
\]
is a presentation of the orbifold of $\Gamma$-sectors of $Q$ defined in
\cite[Definition 2.3]{farsiseaton1}, and hence that $\varphi_\Gamma$ corresponds to the application of
$\varphi$ to the $\Gamma$-sectors.  As the $\Gamma$-sectors of a closed orbifold
consist of a finite disjoint union of closed orbifolds by \cite[Lemma 2.9]{farsiseaton1},
$M^{\langle\theta\rangle} \rtimes C_G(\theta) = \emptyset$ for all but finitely many
elements of $HOM(\Gamma, G)/G$ so that $\varphi_\Gamma$ is finite.  That
$\varphi_\Gamma$ is multiplicative is a consequence of
\cite[Proposition 3.2]{farsiseaton3}.  In particular, when $\varphi$ is equal to
the Euler or Euler--Satake characteristic, the definition of $\chi_\Gamma$
and $\chi_\Gamma^{ES}$ given above coincides with that of \cite{farsiseaton3}.
Similarly, the invariant $\varphi_{(\Gamma / H)} (M\rtimes G)$ can be interpreted geometrically
in terms of $G$-bundles over the covering space
\[
    \tilde{\Sigma} \times_{\Gamma} (\Gamma / H),
\]
where $\Sigma$ is a closed manifold with fundamental group $\Gamma$ and
$\tilde{\Sigma}$ is the universal covering space of $\Sigma$.

Note that by \cite[Theorem 4-4]{tamanoi2},
$Aut^{P_{\rho}}_{\Gamma,\,G}$ is isomorphic to the quotient
$H\backslash T_\rho$ where $T_\rho$ is the isotropy group of $\rho$
in $N_\Gamma(H) \times G$ with respect to the action given in
Equation \eqref{eq-NormTimesGAction}.  Using this identification, the
action of $Aut^{P_{\rho}}_{\Gamma,\,G}$ on $M^{\langle \rho \rangle}$
is given by $H(u,g)x = gx$ as in \cite[Proposition 5-3]{tamanoi2}. In
particular, as $H$ has finite index in $\Gamma$ and $G$ acts locally
freely, the action of each $Aut^{P_{\rho}}_{\Gamma,\,G}$ on
$M^{\langle \rho \rangle}$ is clearly locally free, and hence
presents an orbifold.
As in the case of $G$ finite, when $H=\Gamma$, the $N_{\Gamma}(\Gamma)$-action on
$\rho\co \Gamma \to G$ is absorbed by conjugation by
$G$.  It follows that $HOM(H, G)/ (N_{\Gamma}(H) \times G) = HOM(H, G)/ G$ and
$Aut^{P_{\rho}}_{\Gamma,\,G} = C_G(\rho)$, so that
$\varphi_{(\Gamma/H)} =  \varphi_{\Gamma}$.
That $\varphi_{(\Gamma/H)}$ is in general finite follows from the following.

\begin{proposition}
\label{prop-MultInvarGamSetX}
Let $\Gamma$ be a finitely generated discrete group, and let $M \rtimes G$
be a cc-presentation of the orbifold $Q$.
Let $H \leq \Gamma$ be a subgroup of index $n$.
Then for a multiplicative orbifold invariant $\varphi$, we have
\[
\varphi_{(\Gamma / H)}(M\rtimes G)   = \ \sum_{(\tau) \in \pi^{-1}((\Gamma/H))} \varphi
\left(  (M^n)^{\langle\tau\rangle}\rtimes C_{ G({\mathcal S}_n)}(\tau)  \right),
\]
where
$\pi \co HOM(\Gamma, G({\mathcal S}_n))/ G({\mathcal S}_n ) \to HOM(\Gamma, {\mathcal S}_n)/ {\mathcal S}_n$
denotes composition with the obvious homomorphism $G(\mathcal{S}_n) \to \mathcal{S}_n$ and
$HOM(\Gamma, {\mathcal S}_n)/ {\mathcal S}_n$ is identified with the set of
isomorphism classes of $\Gamma$-sets of order $n$.
\end{proposition}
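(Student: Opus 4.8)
The plan is to match the two sums term by term. First I would exhibit a bijection between the indexing set $\pi^{-1}((\Gamma/H))$ of the right-hand side and the indexing set $HOM(H,G)/(N_\Gamma(H)\times G)$ of $\varphi_{(\Gamma/H)}$, and then identify the corresponding summands as isomorphic orbifold groupoids.

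For the bijection, fix a set $X$ with $|X|=n$. By Theorem \ref{th-CrossedPdctBundleChr}(\textit{i}), $HOM(\Gamma,G(\mathcal{S}_n))/G(\mathcal{S}_n)$ is in bijection with the set of isomorphism classes of $\Gamma$--$G$--principal bundles over $X$, and by construction the class of $\theta\co\Gamma\to G(\mathcal{S}_n)$ is sent to a bundle whose underlying $\Gamma$--set is the one classified by $\pi(\theta)$, i.e.\ by the composite of $\theta$ with $G(\mathcal{S}_n)\to\mathcal{S}_n$. Such a bundle is $\Gamma$--irreducible exactly when this $\Gamma$--set is transitive; hence $\pi^{-1}((\Gamma/H))$ is identified with the set of isomorphism classes of $\Gamma$--irreducible $G$--principal bundles over $X$ whose underlying $\Gamma$--set is isomorphic to $\Gamma/H$. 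By Theorem \ref{th-CrossedPdctBundleChr}(\textit{ii}), these form precisely the summand $HOM(H,G)/(N_\Gamma(H)\times G)$ appearing in the disjoint union there, for the $\Gamma$--conjugacy class of the index--$n$ subgroup $H$. Given $[\rho]$, I will write $\tau_\rho\co\Gamma\to G(\mathcal{S}_n)$ for a representative of the corresponding class in $\pi^{-1}((\Gamma/H))$; equivalently $\tau_\rho$ and $P_\rho$ correspond under the bijection of Theorem \ref{th-CrossedPdctBundleChr}(\textit{i}) after identifying $\Gamma/H$ with $X$.

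Next I would identify, for each such pair, the groupoids $M^{\langle\rho\rangle}\rtimes Aut^{P_\rho}_{\Gamma\mbox{-}G}$ and $(M^n)^{\langle\tau_\rho\rangle}\rtimes C_{G(\mathcal{S}_n)}(\tau_\rho)$. Writing $M^n=Map(X,M)$ with the natural $G(\mathcal{S}_n)$--action and fixing a basepoint $x_0\in X$ with stabilizer $H$, the evaluation map $\phi\mapsto\phi(x_0)$ restricts to a diffeomorphism $(M^n)^{\langle\tau_\rho\rangle}\to M^{\langle\rho\rangle}$: a $\tau_\rho$--fixed map $\phi$ is determined by $\phi(x_0)$ since $\Gamma$ acts transitively on $X$, evaluating the fixed-point equations at $x_0$ forces $\phi(x_0)$ to be fixed by the image of $\rho$ in $G$, and every such point extends consistently to a $\tau_\rho$--fixed $\phi$. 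For the structure groups, $G(\mathcal{S}_n)$ is the automorphism group of the trivial $G$--bundle $X\times G\to X$ by Proposition \ref{prop-WreathPdctIsAutomGp}, and $P_\rho$ is isomorphic to this trivial bundle carrying the $\Gamma$--action through $\tau_\rho$, so the $\Gamma$--equivariant principal bundle automorphisms of $P_\rho$ are exactly the elements of $G(\mathcal{S}_n)$ commuting with the image of $\tau_\rho$; that is, $Aut^{P_\rho}_{\Gamma\mbox{-}G}\cong C_{G(\mathcal{S}_n)}(\tau_\rho)$. One then checks that these two identifications intertwine the respective actions, which on the $M^{\langle\rho\rangle}$ side is the action $H(u,g)z=gz$ recalled before the statement; this, together with the isomorphism $Aut^{P_\rho}_{\Gamma\mbox{-}G}\cong H\backslash T_\rho\cong C_{G(\mathcal{S}_n)}(\tau_\rho)$, is the purely group-theoretic content of \cite[Theorem 4--4 and Proposition 5--3]{tamanoi2}, none of which uses finiteness of $G$. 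Since $H$ has finite index in $\Gamma$ and $G$ acts locally freely, the translation groupoid $M^{\langle\rho\rangle}\rtimes Aut^{P_\rho}_{\Gamma\mbox{-}G}$ is an orbifold groupoid as noted above, and by the previous identification so is $(M^n)^{\langle\tau_\rho\rangle}\rtimes C_{G(\mathcal{S}_n)}(\tau_\rho)$; the two being isomorphic, they are in particular Morita equivalent, so $\varphi$ assigns them the same value.

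Summing over $[\rho]\in HOM(H,G)/(N_\Gamma(H)\times G)$, identified with $(\tau)\in\pi^{-1}((\Gamma/H))$ by the first step, and using that $\varphi$ is well defined on Morita equivalence classes gives the asserted equality. Only finitely many terms on either side are nonzero, since the $\Gamma$--sectors of the closed orbifold presented by $M^n\rtimes G(\mathcal{S}_n)$ (respectively $M\rtimes G$) form a finite disjoint union of closed orbifolds by \cite[Lemma 2.9]{farsiseaton1} and \cite[Theorem 3.5]{farsiseaton2}, so $\varphi$ vanishes on all but finitely many summands. I expect the main obstacle to be the second step: matching $Aut^{P_\rho}_{\Gamma\mbox{-}G}$ with $C_{G(\mathcal{S}_n)}(\tau_\rho)$ and verifying that the fixed-point identification $(M^n)^{\langle\tau_\rho\rangle}\cong M^{\langle\rho\rangle}$ is equivariant, since this is where one must confirm that Tamanoi's bookkeeping for finite $G$ carries over verbatim when $G$ is a compact, connected Lie group acting locally freely. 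As a consistency check, taking $H=\Gamma$ gives $n=1$, $\pi^{-1}((\Gamma/\Gamma))=HOM(\Gamma,G)/G$, and both sides reduce to $\varphi_\Gamma(M\rtimes G)$.
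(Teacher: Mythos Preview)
Your proposal is correct and follows essentially the same approach the paper intends: the paper omits the proof entirely, stating only that it is identical to \cite[Proposition~6--1]{tamanoi2}, and your argument---bijecting $\pi^{-1}((\Gamma/H))$ with $HOM(H,G)/(N_\Gamma(H)\times G)$ via Theorem~\ref{th-CrossedPdctBundleChr} and then identifying $(M^n)^{\langle\tau_\rho\rangle}\rtimes C_{G(\mathcal{S}_n)}(\tau_\rho)$ with $M^{\langle\rho\rangle}\rtimes Aut^{P_\rho}_{\Gamma\mbox{-}G}$ using Proposition~\ref{prop-WreathPdctIsAutomGp} and \cite[Theorem~4--4, Proposition~5--3]{tamanoi2}---is exactly that argument spelled out, with the only addition being your explicit check that nothing in Tamanoi's bookkeeping requires $G$ to be finite.
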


The proof is identical to \cite[Proposition 6-1]{tamanoi2} and hence omitted.  Recall that
$C_{ G({\mathcal S}_n)}(\tau) \cong Aut_{\Gamma,\,G}^{P_\tau}$ for a $\Gamma$-$G$-principal bundle associated
by Theorem \ref{th-CrossedPdctBundleChr}({\it i}) to the conjugacy class $(\tau)$ of $\tau$; see Equation \eqref{eq-CG=Aut}.

When $M \rtimes G$ is a cc-presentation of the orbifold $Q$,
the \emph{$n$th wreath symmetric product $M^n \rtimes (G(\mathcal{S}_n))$
of $Q$} is the orbifold presented by $M^n \rtimes G(\mathcal{S}_n))$
where $G(\mathcal{S}_n)$ is the wreath product as in Remark \ref{rem-SpecialCaseXisN}
and the action of $((g_1, \ldots, g_n), s) \in G(\mathcal{S}_n)$ on $(x_1, \ldots, x_n) \in M^n$
is given by
\[
    ((g_1, \ldots, g_n),s)  (x_1, \dots, x_n)
    =
    (g_1 x_{s^{-1}(1)}, \dots, g_n x_{s^{-1}(n)}).
\]
The proof of \cite[Proposition 5-4]{tamanoi2} extends directly to this case;
we recall it briefly below.

\begin{theorem}
\label{th-GeneratingFunctionGammaExtensionsFuncEqnNewTheo}
Let $\Gamma$ be a finitely generated discrete group, and let $M \rtimes G$
be a cc-presentation of the orbifold $Q$.  For a multiplicative orbifold
invariant $\varphi$,
\[
    \sum_{n\geq 0} q^n \varphi_{\Gamma} \left(M^n\rtimes G({\mathcal S}_n)\right)
    = \prod_{(H), [ \rho]} \left( \sum_{n \geq 0}\,
    q^{|\Gamma/H| n} \varphi\left((M^{\langle \rho\rangle})^{n}\rtimes
    Aut_{\Gamma,\,G}^{P_\rho}({\mathcal S}_n)\right)\right),
\]
where the product runs over all conjugacy classes $(H)$ of
subgroups of $\Gamma$ of finite index and all elements $[\rho]$ of $HOM(H,G)/(N_{\Gamma}(H)\times G)$,
and $P_\rho$ is the $\Gamma$-$G$-principal bundle corresponding to $\rho$ by
Theorem \ref{th-CrossedPdctBundleChr}({\it ii}).
\end{theorem}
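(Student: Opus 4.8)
The plan is to follow the proof of \cite[Proposition 5--4]{tamanoi2}, every step of which transfers once its ingredients are reinterpreted for cc--presentations. The starting point is Theorem \ref{th-CrossedPdctBundleChr}: a class $(\theta) \in HOM(\Gamma, G(\mathcal{S}_n))/G(\mathcal{S}_n)$ is the same datum as an isomorphism class of $\Gamma$--$G$--principal bundles over the $\Gamma$--set $\mathbf{n}$. Decomposing $\mathbf{n}$ into $\Gamma$--orbits and classifying the $\Gamma$--irreducible $G$--principal bundle carried by each orbit via Theorem \ref{th-CrossedPdctBundleChr}({\it ii}) (equivalently Proposition \ref{prop-GammaGbundlesClassification}), such a class becomes the combinatorial datum of a function $((H),[\rho]) \mapsto m_{(H),[\rho]} \in \Z_{\geq 0}$, where $(H)$ runs over conjugacy classes of finite-index subgroups of $\Gamma$ and $[\rho]$ over $HOM(H,G)/(N_\Gamma(H)\times G)$, subject to $\sum_{(H),[\rho]} |\Gamma/H|\,m_{(H),[\rho]} = n$ with all but finitely many $m_{(H),[\rho]}$ equal to zero (only finitely many $(H)$ of each index occur because $\Gamma$ is finitely generated).

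Next I would record the two structural decompositions attached to a representative $\theta$ with multiplicities $m_{(H),[\rho]}$; these are \cite[Proposition 5--3]{tamanoi2} with the point replaced by the fixed-point manifolds $M^{\langle\rho\rangle}$. On the one hand, the fixed-point set decomposes as $(M^n)^{\langle\theta\rangle} \cong \prod_{(H),[\rho]}(M^{\langle\rho\rangle})^{m_{(H),[\rho]}}$, since on each orbit $\Gamma/H$ a $\Gamma$--fixed map $\Gamma/H \to M$ is determined by its value at $eH$, which must lie in $M^{\langle\rho\rangle}$. On the other hand, the centralizer decomposes as $C_{G(\mathcal{S}_n)}(\theta) \cong \prod_{(H),[\rho]} Aut_{\Gamma\mbox{-}G}^{P_\rho}(\mathcal{S}_{m_{(H),[\rho]}})$, since a bundle automorphism must permute isomorphic irreducible summands and act by bundle automorphisms within each isotypic block, and $Aut_{\Gamma\mbox{-}G}^{P_\rho} \cong H\backslash T_\rho$ acts on $M^{\langle\rho\rangle}$ by $H(u,g)x = gx$. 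Because $H$ has finite index in $\Gamma$ and $G$ acts locally freely on $M$, each of these actions is locally free, so every factor presents an orbifold and the decomposition is an equivalence of the translation groupoid $(M^n)^{\langle\theta\rangle}\rtimes C_{G(\mathcal{S}_n)}(\theta)$ with a product of orbifold groupoids.

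The rest is formal. Multiplicativity of $\varphi$ applied to these product decompositions gives
\[
    \varphi\bigl((M^n)^{\langle\theta\rangle}\rtimes C_{G(\mathcal{S}_n)}(\theta)\bigr)
    = \prod_{(H),[\rho]} \varphi\bigl((M^{\langle\rho\rangle})^{m_{(H),[\rho]}}\rtimes Aut_{\Gamma\mbox{-}G}^{P_\rho}(\mathcal{S}_{m_{(H),[\rho]}})\bigr),
\]
where each factor with $m_{(H),[\rho]} = 0$ equals $\varphi$ of a point, i.e. $1$. Summing over the classes $(\theta)$ contributing to $\varphi_\Gamma(M^n\rtimes G(\mathcal{S}_n))$ and then over $n\geq 0$, and reorganizing the resulting double sum by grouping according to the tuple $(m_{(H),[\rho]})$ with $n = \sum_{(H),[\rho]} |\Gamma/H|\,m_{(H),[\rho]}$, one sees that the left-hand side of the theorem is exactly the coefficient-by-coefficient expansion of the product $\prod_{(H),[\rho]}\bigl[\sum_{m\geq 0} q^{|\Gamma/H| m}\varphi((M^{\langle\rho\rangle})^m\rtimes Aut_{\Gamma\mbox{-}G}^{P_\rho}(\mathcal{S}_m))\bigr]$; that each coefficient is a finite sum is the finiteness of $\varphi_\Gamma$ on cc--presentations, established via \cite[Lemma 2.9]{farsiseaton1}. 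The main obstacle is the bookkeeping of the second paragraph: one must check that the orbit decomposition of $\mathbf{n}$, the classification of the $\Gamma$--irreducible $G$--principal bundles, and the description of $Aut_{\Gamma\mbox{-}G}^{P_\rho}$ together assemble into a genuine equivalence of translation groupoids, not merely a bijection of underlying sets together with an abstract isomorphism of groups. Once this is in place, nothing in Tamanoi's argument is disturbed by replacing the finite group by a compact connected $G$ acting locally freely, since $\varphi$ enters only through its multiplicativity and finiteness on orbifold groupoids.
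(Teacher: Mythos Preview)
Your proposal is correct and follows essentially the same route as the paper's proof: both translate conjugacy classes $(\theta)$ into isomorphism classes of $\Gamma$--$G$--principal bundles over $\mathbf{n}$ via Theorem~\ref{th-CrossedPdctBundleChr}, decompose into irreducibles to obtain multiplicity data, invoke the fixed-point/centralizer product decompositions from \cite[Sections 3--4]{tamanoi2}, apply multiplicativity of $\varphi$, and then rearrange the sum over $n$ into the claimed product. Your write-up is in fact more explicit than the paper's, which simply cites ``the results detailed in \cite[Sections 3 and 4]{tamanoi2}, which hold for all groups $G$'' for the key decomposition step you spell out in your second paragraph.
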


\begin{proof}
A homomorphism $\theta\co\Gamma\to G(\mathcal{S}_n)$ corresponds by
Theorem \ref{th-CrossedPdctBundleChr}({\it i}) to a $\Gamma$-$G$-principal bundle
$P_\theta$ over $\mathbf{n}=\{ 1, 2, \ldots, n \}$.  Such a bundle decomposes into
a finite collection of $\Gamma$-irreducible $G$-principal bundles over
the $\Gamma$-orbits in $\mathbf{n}$, each identified with $\Gamma/H$ for
some $H \leq \Gamma$ with finite index.  Each irreducible
bundle then corresponds to an element of $HOM(H,G)/(N_{\Gamma}(H)\times G)$
by Theorem \ref{th-CrossedPdctBundleChr}({\it ii}).  Let $r(H, \rho)$
denote the number of $\Gamma$-irreducible $G$-principal bundles whose isomorphism class
corresponds to $\rho\co H \to G$ in the decomposition of $P_\theta$.
Note that $\sum_{(H), [\rho]}  | \Gamma /H|\, r(H, \rho)\ = \ n$.

By the results detailed in \cite[Sections 3 and 4]{tamanoi2},
which hold for all groups $G$,
\[
    \varphi_\Gamma(M^n\rtimes G({\mathcal S}_n))
    =
    \ \sum\limits_{r(H, \rho)} \prod\limits_{(H)} \prod\limits_{[\rho]}
    \ \varphi_\Gamma \left( (M^{\langle\rho\rangle})^{r(H, \rho)}\rtimes
    Aut_{\Gamma,\,G}^{P_\rho}({\mathcal S}_{r(H, \rho)}) \right),
\]
where the sum over $r(H, \rho)$ is over all
sets of non-negative integers such that
$\sum_{(H), [\rho]}  | \Gamma /H|\, r(H, \rho)\ = \ n$.
Taking the sum over $n$ and rearranging terms yields
\[
    \sum\limits_{n \geq 0} q^n  \varphi_\Gamma(M^n\rtimes G({\mathcal S}_n))
    =
    \prod\limits_{(H)} \prod\limits_{[\rho]} \sum\limits_{r \geq 0}  q^{| \Gamma /H| r}
    \ \varphi_\Gamma\left( (M^{\langle \rho \rangle})^{r}\rtimes
    Aut_{\Gamma,\,G}^{P_\rho}({\mathcal S}_{r})\right).
\]
\end{proof}


\subsection{The Euler and Euler--Satake Characteristics}
\label{subsec-ExamplesTamanoiFunctEqautionWreathProducts}

In this section, we interpret Theorem \ref{th-GeneratingFunctionGammaExtensionsFuncEqnNewTheo}
for specific orbifold invariants.  In particular, we consider the standard Euler characteristic
$\chi(M\rtimes G) = \chi(M/G)$ and the Euler--Satake characteristic $\chi_{ES}(M\rtimes G)$,
extending \cite[Theorems 5-5 and 6-3]{tamanoi2} to the case of orbifolds that admit a cc-presentation.

We first consider the $\Gamma$-extension $\chi_\Gamma$ of the usual Euler characteristic
$\chi(M/G)$ given in Definition \ref{def-multInvAssocInvariants}.
The following is needed for the case of $\Gamma$ abelian, see \cite[Lemma 6-2]{tamanoi2}.

\begin{lemma}
\label{lem-GenerFunctGammaExtensStandardEulerCharacteristic}
Let $\Gamma$ be a finitely generated abelian discrete group, and let $M \rtimes G$
be a cc-presentation of the orbifold $Q$.
For any subgroup $H \leq \Gamma$ of finite index in  $\Gamma $ we have

\[
\chi_{(\Gamma/H)} (M\rtimes G)\, = \,
\sum_{{(\rho) \in HOM(H, G)/G}}\
\chi_{}  \left( M^{\langle\rho\rangle}\rtimes
C_G(\rho) \right) = \chi_H( M \rtimes G).
\]
\end{lemma}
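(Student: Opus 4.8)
The plan is to reduce the statement to a purely combinatorial identity about $\Gamma$-sets by combining Proposition~\ref{prop-MultInvarGamSetX} with the known additivity/multiplicativity properties of the Euler characteristic. The first claimed equality, $\chi_{(\Gamma/H)}(M\rtimes G) = \sum_{(\rho)\in HOM(H,G)/G}\chi(M^{\langle\rho\rangle}\rtimes C_G(\rho))$, should follow directly from the discussion preceding Proposition~\ref{prop-MultInvarGamSetX}: when $\Gamma$ is abelian, $N_\Gamma(H)=\Gamma$ and I would first check that the $N_\Gamma(H)\times G$-action on $HOM(H,G)$ of Equation~\ref{eq-NormTimesGAction} collapses — since conjugation in the abelian group $\Gamma$ is trivial, the $N_\Gamma(H)$-part acts trivially on $HOM(H,G)$, so the orbit set $HOM(H,G)/(N_\Gamma(H)\times G)$ coincides with $HOM(H,G)/G$ and the isotropy group $T_\rho$ reduces, via $Aut^{P_\rho}_{\Gamma\text{-}G}\cong H\backslash T_\rho$, to $C_G(\rho)$ (as $H$ is abelian, $\rho(H)\subseteq C_G(\rho)$ acts trivially on itself by conjugation). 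This gives the middle expression verbatim from Definition~\ref{def-multInvAssocInvariants}(ii).

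For the second equality, $\sum_{(\rho)}\chi(M^{\langle\rho\rangle}\rtimes C_G(\rho)) = \chi_H(M\rtimes G)$, I would invoke the key geometric fact that for a \emph{connected} group acting, the usual Euler characteristic of the orbit space is insensitive to passing to a subgroup of the acting group that still acts with the same orbits up to finite cover — more precisely, the analogue of Tamanoi's Lemma~6--2, namely that $\chi(M^{\langle\rho\rangle}\rtimes Aut^{P_\rho})=\chi(M^{\langle\rho\rangle}\rtimes C_G(\rho))$ because $\chi$ of an orbit space only depends on the orbit space itself and $C_G(\rho)\leq Aut^{P_\rho}$ induces a surjection with fibers that are quotients of $\mathbf{n}$-related connected pieces, hence the orbit spaces agree. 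Then, by Definition~\ref{def-multInvAssocInvariants}(i) applied to $\Gamma$ replaced by $H$, the sum $\sum_{(\rho)\in HOM(H,G)/G}\chi(M^{\langle\rho\rangle}\rtimes C_G(\rho))$ is literally $\chi_H(M\rtimes G)$ — so in fact once the $Aut$-vs-$C_G$ identification and the abelian collapse are in place, this is essentially unwinding definitions.

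The main obstacle I anticipate is the step identifying $Aut^{P_\rho}_{\Gamma\text{-}G}$ with $C_G(\rho)$ in the abelian case and verifying that the induced action on $M^{\langle\rho\rangle}$ is the expected conjugation/translation action, so that the orbit space (and hence $\chi$) is unchanged. In Tamanoi's finite-group setting this is Lemma~6--2 and uses that $\chi$ of a global quotient $X/\Gamma$ equals $\chi(X)/|\Gamma|$-type reasoning combined with the structure of $Aut^{P_\rho}$ from Theorem~4--4; here I must instead use that $\chi(M^{\langle\rho\rangle}\rtimes K)=\chi(M^{\langle\rho\rangle}/K)$ for the compact connected-plus-finite group $K=Aut^{P_\rho}$, and that the projection of orbit spaces coming from $C_G(\rho)\hookrightarrow Aut^{P_\rho}$ is a finite quotient whose effect on the ordinary Euler characteristic is trivial precisely because — as recalled after Definition~\ref{def-multInvAssocInvariants} — $H$ has finite index and $G$ acts locally freely, so the relevant groups differ only by a finite group whose action on the (even-dimensional-behaving, via the connected $G$) quotient does not change $\chi$. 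I would cite \cite[Lemma 6--2]{tamanoi2} for the combinatorial skeleton and \cite{farsiseaton3} for the orbit-space interpretation of $\chi$, and otherwise the argument is a direct transcription of Tamanoi's with "$X/\Gamma$" systematically replaced by "orbit space of the translation groupoid."
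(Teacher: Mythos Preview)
Your overall strategy matches the paper's --- both follow Tamanoi's Lemma~6--2 --- but there is a genuine error in your identification of $Aut^{P_\rho}_{\Gamma\mbox{-}G}$ and a resulting gap in the recovery argument.

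In the abelian case one has $N_\Gamma(H)=\Gamma$ and, since conjugation in $\Gamma$ is trivial, the stabilizer is $T_\rho = \Gamma \times C_G(\rho)$, with $H$ embedded diagonally via $h\mapsto(h,\rho(h))$. Hence $Aut^{P_\rho}_{\Gamma\mbox{-}G}\cong H\backslash T_\rho = \Gamma\times_\rho C_G(\rho)$, which is \emph{not} $C_G(\rho)$: it is an extension of the finite group $\Gamma/H$ by $C_G(\rho)$. (For instance, if $\rho$ is trivial this is $(\Gamma/H)\times G$.) So your claim in the first paragraph that $Aut^{P_\rho}_{\Gamma\mbox{-}G}$ ``reduces to $C_G(\rho)$'' is false, and the first equality does not follow directly from Definition~\ref{def-multInvAssocInvariants}(ii) as you assert.

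Your attempted recovery --- that the discrepancy is a finite quotient which does not affect $\chi$ --- is also not correct as stated: a nontrivial finite group action certainly can change the Euler characteristic of an orbit space (e.g.\ the antipodal $\Z/2$ on $S^2$). The actual mechanism, and what the paper uses, is more specific: recalling that the action of $Aut^{P_\rho}_{\Gamma\mbox{-}G}$ on $M^{\langle\rho\rangle}$ is $H(u,g)\cdot x = gx$, the image of the natural map $\Gamma\to\Gamma\times_\rho C_G(\rho)$ acts \emph{trivially} on $M^{\langle\rho\rangle}$. Therefore the orbit spaces $M^{\langle\rho\rangle}/(\Gamma\times_\rho C_G(\rho))$ and $M^{\langle\rho\rangle}/C_G(\rho)$ literally coincide, and so their Euler characteristics agree. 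Once this is established, the sum over $[\rho]\in HOM(H,G)/(N_\Gamma(H)\times G)=HOM(H,G)/G$ becomes $\sum_{(\rho)}\chi(M^{\langle\rho\rangle}\rtimes C_G(\rho))=\chi_H(M\rtimes G)$ by definition, exactly as you say in your second paragraph.
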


\begin{proof}
By Definition \ref{def-multInvAssocInvariants}
\[
\chi_{(\Gamma/H)} (M\rtimes G)
= \ \sum_{[\rho] \in HOM(H, G)/(N_{\Gamma}(H) \times G)} \chi
\left(  M^{\langle \rho\rangle}\rtimes Aut^{P_{\rho}}_{\Gamma,\,G}  \right).
\]
As $\Gamma$ is abelian, $HOM(H, G)/(N_{\Gamma}(H) \times G) = HOM(H, G)/ G$,
and
\[
Aut_{\Gamma,\,G}^{P_{\rho}}
\cong  \Gamma \times_{\rho} C_G(\rho),
\]
for $\rho \in HOM(H, G)$ by \cite[Equation 4-4]{tamanoi2}.
Thus we have
\begin{align*}
    \chi_{(\Gamma/H)} (M\rtimes G)
    &=
    \sum\limits_{(\rho) \in HOM(H, G)/G}\,  \chi \left(M^{\langle\rho\rangle}\rtimes
    Aut_{\Gamma,\,G}^{P_{\rho}} \right)
            \\
    &=
    \sum\limits_{(\rho) \in HOM(H, G)/G}\,  \chi \left(M^{\langle\rho\rangle}\rtimes
   ( \Gamma \times_{\rho} C_G(\rho) \right) .
\end{align*}

Hence, if $\{  \gamma_j \} $ is a set of
representatives for the cosets
$H \backslash \Gamma$, we have
\[
    \Gamma \times_{\rho} C_G (\rho) = \coprod_j
    \{       i(\gamma_j) C_G(\rho)  \} .
\]
Recalling that the action of $Aut_{\Gamma,\,G}^{P_{\rho}}$ on $M^{\langle \rho\rangle}$
is given by $H(u,g)x = gx$, the image of the natural injection $i \co \Gamma \to  ( \Gamma \times_{\rho} C_G(\rho) )$
acts trivially on $M^{\langle\rho\rangle}$.  Therefore,
\[
    M^{\langle\rho\rangle} /
   (\Gamma \times_{\rho} C_G(\rho) )
    =
    M^{\langle\rho\rangle} /
    C_G(\rho),
\]
and so
\[
    \chi \left(M^{\langle\rho\rangle}\rtimes
   ( \Gamma \times_{\rho} C_G(\rho) )\right) = \chi \left(M^{\langle\rho\rangle}\rtimes
    C_G(\rho) \right),
\]
from which the result follows.
\end{proof}

With this, we have the following.

\begin{theorem}
[$\Gamma$-Extensions of the Euler Characteristic]
\label{th-GenerFunctGammaExtensStandardEulerCharacteristic}
Let $\Gamma$ be a finitely generated discrete group, and let $M \rtimes G$
be a cc-presentation of the orbifold $Q$.  For the multiplicative orbifold
invariant $\chi$, we have
\begin{equation}
\label{eq-GenFuncEC}
\sum_{n \geq 0}q^n \,  \chi_{\Gamma} (M^n\rtimes G({\mathcal S}_n))
= \prod_{r \geq 1} (1- q^r)^{-\sum_{(H_r)}
\chi_{(\Gamma /H_r)} (M\rtimes G) },
\end{equation}
where $(H_r)$ runs over the $\Gamma$-conjugacy classes of
subgroups of $\Gamma$ of finite index $r$.
If $\Gamma$ is abelian, then
\begin{equation}
\label{eq-AbelGenFuncEC}
\sum_{n\geq 0}q^n \,  \chi_{\Gamma} (M^n\rtimes G({\mathcal S}_n))
= \prod_{r \geq 1} (1- q^r)^{-\sum_{H_r}
\chi_{H_r} (M\rtimes G) },
\end{equation}
where $H_r$ runs over all subgroups of $\Gamma$ of finite index $r$.
\end{theorem}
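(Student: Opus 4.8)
The plan is to obtain both generating functions as specializations of the master formula in Theorem~\ref{th-GeneratingFunctionGammaExtensionsFuncEqnNewTheo}, using the multiplicativity of $\chi$ together with a careful analysis of the inner Euler-characteristic factors. Starting from
\[
    \sum_{n\geq 0} q^n \varphi_{\Gamma}(M^n\rtimes G(\mathcal{S}_n))
    = \prod_{(H),[\rho]}\Bigl[\sum_{n\geq 0} q^{|\Gamma/H|n}\,
      \varphi\bigl((M^{\langle\rho\rangle})^n\rtimes
      Aut_{\Gamma\text{-}G}^{P_\rho}(\mathcal{S}_n)\bigr)\Bigr]
\]
with $\varphi=\chi$, I would first identify each inner factor. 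The key classical input is Macdonald's formula for Euler characteristics of symmetric products: for a space $Y$ with an action of a group $K$ presenting an orbifold, one has $\chi\bigl(Y^n\rtimes K(\mathcal{S}_n)\bigr)$ summing over $n$ to $(1-q)^{-\chi(Y/K)}$, because $\chi$ of the $n$th wreath symmetric product of $Y\rtimes K$ equals $\binom{\chi(Y/K)+n-1}{n}$ — this is the orbifold-Euler-characteristic analogue that was already used in the $\Gamma=\Z$ case of \cite{farsiseaton3}. Applying this with $Y=M^{\langle\rho\rangle}$, $K=Aut_{\Gamma\text{-}G}^{P_\rho}$, and with $q$ replaced by $q^{|\Gamma/H|}=q^r$ where $r=|\Gamma/H|$, each inner bracket becomes $(1-q^r)^{-\chi(M^{\langle\rho\rangle}\rtimes Aut_{\Gamma\text{-}G}^{P_\rho})}$.

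Next I would collect the factors according to the index $r$. Grouping the product over all pairs $(H),[\rho]$ by the value $r=|\Gamma/H|$, and then over fixed $(H_r)$ summing $\chi(M^{\langle\rho\rangle}\rtimes Aut_{\Gamma\text{-}G}^{P_\rho})$ over $[\rho]\in HOM(H_r,G)/(N_\Gamma(H_r)\times G)$, I recognize the exponent sum as exactly $\chi_{(\Gamma/H_r)}(M\rtimes G)$ by Definition~\ref{def-multInvAssocInvariants}(ii). Hence
\[
    \sum_{n\geq 0}q^n\,\chi_\Gamma(M^n\rtimes G(\mathcal{S}_n))
    = \prod_{r\geq 1}\Bigl[(1-q^r)^{-\sum_{(H_r)}\chi_{(\Gamma/H_r)}(M\rtimes G)}\Bigr],
\]
which is Equation~\eqref{eq-GenFuncEC}. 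For the abelian refinement, I invoke Lemma~\ref{lem-GenerFunctGammaExtensStandardEulerCharacteristic}: when $\Gamma$ is abelian every subgroup is normal, so $\Gamma$-conjugacy classes of index-$r$ subgroups are just the index-$r$ subgroups themselves, and the lemma gives $\chi_{(\Gamma/H_r)}(M\rtimes G)=\chi_{H_r}(M\rtimes G)$. Substituting these two facts into \eqref{eq-GenFuncEC} immediately yields \eqref{eq-AbelGenFuncEC}.

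The main obstacle — and the only place requiring genuine care rather than bookkeeping — is justifying that the inner factor is precisely $(1-q^r)^{-\chi(M^{\langle\rho\rangle}\rtimes Aut_{\Gamma\text{-}G}^{P_\rho})}$, i.e.\ the Macdonald-type identity in the orbifold setting. One must verify that $M^{\langle\rho\rangle}$ is a closed manifold and $Aut_{\Gamma\text{-}G}^{P_\rho}$ acts on it locally freely with compact quotient (this follows from the discussion after Definition~\ref{def-multInvAssocInvariants}, since $H$ has finite index and $G$ acts locally freely, so $Aut_{\Gamma\text{-}G}^{P_\rho}\cong H\backslash T_\rho$ is compact), so that $M^{\langle\rho\rangle}\rtimes Aut_{\Gamma\text{-}G}^{P_\rho}$ admits a cc-presentation and $\chi$ of its wreath symmetric products obeys the binomial-coefficient formula. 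This is the analogue of the computation underlying \cite[Theorem 5--5]{tamanoi2} and \cite{farsiseaton3}; once it is in place, everything else is a regrouping of terms in a product of formal power series. I would also remark that convergence/formal-power-series issues are harmless since $\chi_\Gamma$ is finite by the discussion following Definition~\ref{def-multInvAssocInvariants} and only finitely many $(H),[\rho]$ contribute a nonzero exponent for each fixed $r$ (the $\Gamma$-sectors being a finite union of closed orbifolds).
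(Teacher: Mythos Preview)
Your proposal is correct and follows essentially the same approach as the paper: apply Theorem~\ref{th-GeneratingFunctionGammaExtensionsFuncEqnNewTheo} with $\varphi=\chi$, use MacDonald's formula (cited in the paper as \cite[Theorem~5.2]{farsiseaton3}) to collapse each inner bracket to $(1-q^{|\Gamma/H|})^{-\chi(M^{\langle\rho\rangle}\rtimes Aut_{\Gamma\mbox{-}G}^{P_\rho})}$, regroup by index $r$ and recognize the exponent as $\chi_{(\Gamma/H_r)}(M\rtimes G)$, and then invoke Lemma~\ref{lem-GenerFunctGammaExtensStandardEulerCharacteristic} for the abelian case. Your additional remarks on why MacDonald's formula applies and on finiteness are more explicit than the paper's, which simply cites the reference; note, however, that $Aut_{\Gamma\mbox{-}G}^{P_\rho}$ need not be connected, so strictly speaking this is not a cc-presentation---but this is irrelevant for MacDonald's formula, since $\chi$ depends only on the orbit space and the orbit space of the wreath symmetric product is $\mathrm{Sym}^n(M^{\langle\rho\rangle}/Aut_{\Gamma\mbox{-}G}^{P_\rho})$.
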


\begin{proof}
By Theorem \ref{th-GeneratingFunctionGammaExtensionsFuncEqnNewTheo}, we have that
$\sum_{n \geq 0}q^n \,  \chi_{\Gamma} (M^n\rtimes G({\mathcal S}_n))$ is given by
\[
\sum_{n\geq 0}q^n \,  \chi_{\Gamma} (M^n\rtimes G({\mathcal S}_n))
\ =\ \prod_{(H), [\rho]} \ \sum_{r \geq 0}  q^{| \Gamma /H| r}
\ \chi\left( \mathcal (M^{\langle \rho \rangle})^r \rtimes
Aut_{\Gamma,\,G}^{P_\rho} \right),
\]
where the product ranges over $\Gamma$-conjugacy classes $(H)$ of subgroups
$H \leq \Gamma$ of finite index as well as
$(N_\Gamma(H) \times G)$-orbits $[\rho]$ of homomorphisms
$\rho \in HOM(H, G)$.  By MacDonald's formula
\cite[Theorem 5.8]{farsiseaton3}, we have that this is equal to
\[
\prod_{(H), [\rho]}  (1- q^{|\Gamma /H|})^{- \chi\left( M^{\langle \rho \rangle}\rtimes
Aut_{\Gamma,\,G}^{P_\rho}\right)}
=
\prod_{r \geq 1} (1- q^{r})^{- \sum_{(H_r)}  \sum_{[\rho]}
\chi\left( M^{\langle \rho \rangle}\rtimes
Aut_{\Gamma,\,G}^{P_\rho}\right)},
\]
where $(H_r)$ ranges over the
$\Gamma$-conjugacy classes of subgroups of finite index $r$ in
$\Gamma$ and  $[\rho]$ ranges over
$(N_\Gamma(H_r) \times G)$-conjugacy classes of homomorphisms.
Noting that the last summation over $[\rho]$ yields exactly
$\chi_{(\Gamma/H_r)}(M\rtimes G)$, Equation \eqref{eq-GenFuncEC} follows.
Then Equation \eqref{eq-AbelGenFuncEC} follows from Lemma
\ref{lem-GenerFunctGammaExtensStandardEulerCharacteristic}.
\end{proof}

For the Euler--Satake characteristic
$\chi_{ES}(M\rtimes G)$ (see \cite{farsiseaton3}),
define $\chi^{ES}_{\Gamma}(M\rtimes G)$ to be the corresponding
$\Gamma$-extension defined in Definition \ref{def-multInvAssocInvariants}.
Then we have the following.

\begin{theorem}
[$\Gamma$-Extension of the Euler--Satake Characteristic]
\label{th-GenerFunctGammaExtensEulerSatakeCharacteristic}
Let $\Gamma$ be a finitely generated discrete group, and let $M \rtimes G$
be a cc-presentation of the orbifold $Q$.  For the multiplicative orbifold
invariant $\chi_{ES}(M\rtimes G)$, we have
\begin{equation}
\label{eq-GenerFunctGammaExtensEulerSatakeCharacteristic}
\sum_{n \geq 0}q^n \,  \chi^{ES}_{\Gamma} (M^n\rtimes G({\mathcal S}_n))
= \exp \left( \sum_{n \geq 1} \frac{q^n}n
\sum_{H\leq \Gamma : |\Gamma /H| =n}  \chi^{ES}_{H} (M\rtimes G)  \right).
\end{equation}
\end{theorem}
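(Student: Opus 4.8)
The plan is to specialize Theorem~\ref{th-GeneratingFunctionGammaExtensionsFuncEqnNewTheo} to $\varphi = \chi_{ES}$ and then apply the exponential form of Macdonald's formula for the Euler--Satake characteristic of symmetric products, just as Theorem~\ref{th-GenerFunctGammaExtensStandardEulerCharacteristic} used the product form of Macdonald's formula for $\chi$. Concretely, Theorem~\ref{th-GeneratingFunctionGammaExtensionsFuncEqnNewTheo} gives
\[
    \sum_{n\geq 0} q^n \chi^{ES}_\Gamma\bigl(M^n\rtimes G(\mathcal{S}_n)\bigr)
    = \prod_{(H),[\rho]} \left[\sum_{r\geq 0} q^{|\Gamma/H| r}\,
    \chi^{ES}\bigl((M^{\langle\rho\rangle})^r \rtimes Aut^{P_\rho}_{\Gamma\mbox{-}G}(\mathcal{S}_r)\bigr)\right],
\]
the product over conjugacy classes $(H)$ of finite-index subgroups and over $[\rho]\in HOM(H,G)/(N_\Gamma(H)\times G)$. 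The first step is to invoke the Euler--Satake analogue of Macdonald's formula from \cite{farsiseaton3} (the exponential rather than multiplicative version, reflecting that $\chi_{ES}$ is additive under disjoint union but behaves like $\chi_{ES}(N\rtimes K)^{r}/r!$-weighted on wreath symmetric products): for a cc--presentation $N\rtimes K$ one has
\[
    \sum_{r\geq 0} t^r \chi^{ES}\bigl(N^r \rtimes K(\mathcal{S}_r)\bigr)
    = \exp\bigl(t\,\chi^{ES}(N\rtimes K)\bigr).
\]
Applying this with $N\rtimes K = M^{\langle\rho\rangle}\rtimes Aut^{P_\rho}_{\Gamma\mbox{-}G}$ and $t = q^{|\Gamma/H|}$ turns each bracketed factor into $\exp\bigl(q^{|\Gamma/H|}\chi^{ES}(M^{\langle\rho\rangle}\rtimes Aut^{P_\rho}_{\Gamma\mbox{-}G})\bigr)$.

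Next I would convert the product of exponentials into a single exponential of a sum and reorganize the indexing. Writing $|\Gamma/H|$ as the index, the sum in the exponent becomes $\sum_{(H),[\rho]} q^{|\Gamma/H|}\chi^{ES}(M^{\langle\rho\rangle}\rtimes Aut^{P_\rho}_{\Gamma\mbox{-}G})$. Grouping by the common value $n = |\Gamma/H|$ gives $\sum_{n\geq 1} q^n \bigl[\sum_{(H_n)}\sum_{[\rho]}\chi^{ES}(M^{\langle\rho\rangle}\rtimes Aut^{P_\rho}_{\Gamma\mbox{-}G})\bigr]$, and by the definition of the $(\Gamma/H_n)$--extension (Definition~\ref{def-multInvAssocInvariants}(ii)) the inner double sum is exactly $\sum_{(H_n)}\chi^{ES}_{(\Gamma/H_n)}(M\rtimes G)$, the sum over conjugacy classes of index-$n$ subgroups. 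To arrive at the stated form, with $H$ ranging over all index-$n$ subgroups weighted by $1/n$ rather than over conjugacy classes, I would use the standard orbit-counting identity: a conjugacy class $(H_n)$ of subgroups of index $n$ has exactly $[\Gamma:N_\Gamma(H_n)] = [\Gamma:H_n]/[N_\Gamma(H_n):H_n] = n/|W_\Gamma(H_n)|$ members, so $\sum_{(H_n)}\chi^{ES}_{(\Gamma/H_n)}(M\rtimes G)$ and $\frac{1}{n}\sum_{H:|\Gamma/H|=n}\chi^{ES}_H(M\rtimes G)$ would agree provided $\chi^{ES}_{(\Gamma/H_n)}(M\rtimes G)$ equals $|W_\Gamma(H_n)|^{-1}$ times the sum of $\chi^{ES}_{H}(M\rtimes G)$ over the conjugates, i.e. provided $\chi^{ES}_{(\Gamma/H)} = \chi^{ES}_H$ up to this normalization coming from the factor $H\backslash T_\rho$ in $Aut^{P_\rho}_{\Gamma\mbox{-}G}$ versus $C_G(\rho)$.

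The step I expect to be the main obstacle is precisely this last reconciliation between the conjugacy-class-weighted sum of $(\Gamma/H)$--extensions and the subgroup-weighted sum of $H$--extensions that appears on the right-hand side of \eqref{eq-GenerFunctGammaExtensEulerSatakeCharacteristic}. Unlike the Euler characteristic case (Lemma~\ref{lem-GenerFunctGammaExtensStandardEulerCharacteristic}), where the coset image $i(\Gamma)$ acts trivially on $M^{\langle\rho\rangle}$ and so collapses $Aut^{P_\rho}_{\Gamma\mbox{-}G}$ down to $C_G(\rho)$ at the level of orbit spaces, the Euler--Satake characteristic \emph{sees} the orders of isotropy groups, so $\chi^{ES}(M^{\langle\rho\rangle}\rtimes Aut^{P_\rho}_{\Gamma\mbox{-}G})$ differs from $\chi^{ES}(M^{\langle\rho\rangle}\rtimes C_G(\rho))$ by the index $[Aut^{P_\rho}_{\Gamma\mbox{-}G}:C_G(\rho)] = [N_{\Gamma;\rho}(H):H]$ (a finite number since $\chi^{ES}$ scales by $1/|K|$ under passing to a $K$-fold cover of groupoids). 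The resolution is a careful bookkeeping argument: summing $[N_{\Gamma;\rho}(H):H]^{-1}\chi^{ES}(M^{\langle\rho\rangle}\rtimes C_G(\rho))$ over $[\rho]\in HOM(H,G)/(N_\Gamma(H)\times G)$, then over conjugacy classes $(H_n)$, and using that the $N_\Gamma(H)$-orbit of $\rho$ together with the conjugacy orbit of $H$ reassembles into the plain sum over all index-$n$ subgroups $H$ and all $(\rho)\in HOM(H,G)/G$ weighted uniformly by $1/n$ — this is the orbit-stabilizer identity $\sum_{(H_n)}\sum_{[\rho]} [N_{\Gamma;\rho}(H_n):H_n]^{-1}(\cdots) = \frac{1}{n}\sum_{H:|\Gamma/H|=n}\sum_{(\rho)}(\cdots)$. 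I would state this normalization carefully (citing \cite[Theorem 4--4]{tamanoi2} for the structure of $Aut^{P_\rho}_{\Gamma\mbox{-}G}$ and \cite{farsiseaton3} for the behavior of $\chi^{ES}$ under finite covers of groupoids), verify it, and then the identification of the exponent with $\sum_{n\geq1}\frac{q^n}{n}\sum_{H:|\Gamma/H|=n}\chi^{ES}_H(M\rtimes G)$ is immediate, completing the proof.
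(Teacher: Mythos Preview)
Your approach is essentially the paper's: start from Theorem~\ref{th-GeneratingFunctionGammaExtensionsFuncEqnNewTheo}, use the exponential Macdonald formula for $\chi_{ES}$ on each factor, pass between $Aut^{P_\rho}_{\Gamma\mbox{-}G}$ and $C_G(\rho)$ via the finite cover $[Aut^{P_\rho}_{\Gamma\mbox{-}G}:C_G(\rho)]=|N_\Gamma^\rho(H)/H|$, and then do exactly the orbit--stabilizer bookkeeping you describe to convert the $(H),[\rho]$ sum into $\frac{1}{n}\sum_{H}\sum_{(\rho)}$. The only difference is the order of two steps: the paper first passes to the cover $(M^{\langle\rho\rangle})^r\rtimes C_G(\rho)(\mathcal{S}_r)\to (M^{\langle\rho\rangle})^r\rtimes Aut^{P_\rho}_{\Gamma\mbox{-}G}(\mathcal{S}_r)$ and \emph{then} applies the Macdonald identity, whereas you apply Macdonald directly with $K=Aut^{P_\rho}_{\Gamma\mbox{-}G}$ and pass to $C_G(\rho)$ afterwards. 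Be aware that your direct application needs a word of justification: $M^{\langle\rho\rangle}\rtimes Aut^{P_\rho}_{\Gamma\mbox{-}G}$ is not literally a cc--presentation since $Aut^{P_\rho}_{\Gamma\mbox{-}G}$ need not be connected, so you must either invoke the Macdonald identity for $\chi_{ES}$ as a statement about closed orbifolds (independent of presentation), or do as the paper does and use the cover first---which sidesteps the issue and is why the paper cites the cover behaviour from \cite{thurston} and \cite{farsiseaton3} at that point.
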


\begin{proof}
We follow the proof of \cite[Theorem 5-5]{tamanoi2}; the main modification is in using
orbifold covers to avoid dealing with infinite orders of $G$ and its subgroups.

By Theorem \ref{th-GeneratingFunctionGammaExtensionsFuncEqnNewTheo}, we have
\begin{equation}
\label{eq-GenerFunctGammaExtensEulerSatakeCharProof}
    \sum\limits_{n \geq 0}q^n \,  \chi^{ES}_{\Gamma} (M^n\rtimes G({\mathcal S}_n))
    =
    \prod\limits_{(H), [ \rho]} \; \sum\limits_{n \geq 0}\,
    q^{|\Gamma/H| n} \chi_{ES}\left((M^{\langle \rho\rangle})^{n}\rtimes
    Aut_{\Gamma,\,G}^{P_\rho}({\mathcal S}_n)\right).
\end{equation}

By \cite[Theorem 4-2]{tamanoi2}, the index $[Aut_{\Gamma,\,G}^{P_\rho}:C_G(\rho)]$ is
given by $|N_\Gamma^\rho(H)/H|$, which is finite, so that
\[
    [Aut_{\Gamma,\,G}^{P_\rho}({\mathcal S}_n):C_G(\rho)({\mathcal S}_n)]
    =
    |N_\Gamma^\rho(H)/H|^n.
\]
It follows that
\[
    (M^{\langle \rho\rangle})^{n}\rtimes C_G(\rho)({\mathcal S}_n)
    \longrightarrow
    (M^{\langle \rho\rangle})^{n}\rtimes Aut_{\Gamma,\,G}^{P_\rho}({\mathcal S}_n)
\]
is an orbifold cover with $|N_\Gamma^\rho(H)/H|^n$ sheets, so that by
\cite[Proposition 13.3.4]{thurston} and \cite[Lemma 2.2, Theorems 2.3 and 5.11]{farsiseaton3}, we have
\begin{align*}
    &\prod\limits_{(H), [ \rho]} \; \sum\limits_{n \geq 0}\,
    q^{|\Gamma/H| n} \;\; \chi_{ES}\left((M^{\langle \rho\rangle})^{n}\rtimes
    Aut_{\Gamma,\,G}^{P_\rho}({\mathcal S}_n)\right)
    \\&\quad=
    \prod\limits_{(H), [ \rho]} \; \sum\limits_{n \geq 0}\,
    \frac{q^{|\Gamma/H| n}}{|N_\Gamma^\rho(H)/H|^n n!}
    \;\; \chi_{ES}\left(M^{\langle \rho\rangle}\rtimes C_G(\rho)\right)^n
    \\&\quad=
    \prod\limits_{(H), [ \rho]} \exp \left(
    \frac{q^{|\Gamma/H|}}{|N_\Gamma^\rho(H)/H|}
    \;\; \chi_{ES}\left(M^{\langle \rho\rangle}\rtimes C_G(\rho)\right) \right)
    \\&\quad=
    \exp \left(\sum\limits_{n \geq 1} q^n \sum\limits_{(H):|\Gamma/H|=n}\sum\limits_{[\rho]}
    \frac{|\Gamma/N_\Gamma^\rho(H)|\;\;\chi_{ES}\left(M^{\langle \rho\rangle}\rtimes C_G(\rho)\right)}
    {|\Gamma/N_\Gamma(H)||N_\Gamma(H)/N_\Gamma^\rho(H)||N_\Gamma^\rho(H)/H|}\right)
    \\&\quad=
    \exp \left(\sum\limits_{n \geq 1} q^n \sum\limits_{(H):|\Gamma/H|=n}
    \frac{1}{|N_\Gamma(H)/H|}
    \sum\limits_{(\rho)}
    \chi_{ES}\left(M^{\langle \rho\rangle}\rtimes C_G(\rho)\right)\right).
\end{align*}
In the last equation, note that we switch from summing over $N_\Gamma(H) \times G$-conjugacy classes $[\rho]$
of $\rho\co H \to G$ to $G$-conjugacy classes ${{(\rho) \in HOM(H, G)/G}}$, and the $N_\Gamma(H)$-orbit of $\rho$ has
$|N_\Gamma(H)/N_\Gamma^\rho(H)|$ elements.  Then as the final sum in the last expression is the definition of
$\chi_H^{ES}\left(M^{\langle \rho\rangle}\rtimes C_G(\rho)\right)$, and
each conjugacy class $(H)$ contains $|\Gamma/N_\Gamma(H)|$ elements, this is equal to
\begin{align*}
    &\exp \left(\sum\limits_{n \geq 1} q^n \sum\limits_{H:|\Gamma/H|=n}
    \frac{1}{|\Gamma/N_\Gamma(H)||N_\Gamma(H)/H|}
    \chi_H^{ES}(M\rtimes G) \right)
    \\&\quad=
    \exp \left(\sum\limits_{n \geq 1} \frac{q^n}{n} \sum\limits_{H:|\Gamma/H|=n}
    \chi_H^{ES}(M\rtimes G) \right).
\end{align*}
\end{proof}


\subsection{Examples: Weighted Projective Spaces}
\label{subsec-wrExamplesExpansionExamplesIIItheHinertia}

Examples of orbifolds that are not global quotients and are most naturally described as
cc-presentations are weighted projective spaces, see e.g. \cite{bfnr,jiang}.

\begin{definition}
\label{def-weightedProjSp}
Let $\Sp^{2k-1} \subset \C^k$ denote the $(2k-1)$-sphere with $k \geq 2$, and let $\T^1 \subset \C$ denote the $1$-dimensional
torus.  For a fixed \emph{weight vector} $(n_1, \ldots, n_k)$, a $k$-tuple of positive integers,
define a $\T^1$-action on $\Sp^{2k-1}$ as
\[
    t (z_1, ..., z_k) = (t^{n_1}z_1, \ldots , t^{n_k} z_k),
    \quad\quad
    t\in \T^1, \quad (z_1, \ldots, z_k) \in \Sp^{2k-1}.
\]
It is easy to see that this action is locally free.  The \emph{$(n_1, \ldots, n_k)$-weighted projective space
$\Pb(n_1, \ldots, n_k)$} is the orbifold presented by $\Sp^{2k-1}\rtimes \T^1$.  When the weights $n_i$
are pairwise relatively prime and not all equal to $1$, we call $\Pb(n_1, \ldots, n_k)$ a \emph{teardrop}.
\end{definition}

Note that the standard $\Z_n$-teardrop, an orbifold homeomorphic to $\Sp^2$ with one singular point modeled on
$\R^2\rtimes \Z_n$, $n > 1$, with $\Z_n$ acting as rotations, is given by $\Pb(1, n)$.

In this section, we will illustrate Theorems \ref{th-GenerFunctGammaExtensStandardEulerCharacteristic} and
\ref{th-GenerFunctGammaExtensEulerSatakeCharacteristic} with the weighted projective spaces $\Pb(m, mn, n)$ where $m,n > 1$ are relatively prime
and $\Gamma$ is $\Z$, $\Z^{\ell}$, or the fundamental group of a
closed orientable surface of genus greater than 1.
Let $M = \Sp^5$, and note that a point $(z_1, z_2, z_3) \in \Sp^5$ is in a singular orbit if and only if $z_1 = 0$ or $z_3 = 0$.
Hence, the singular set of $\Pb(m, mn, n)$ consists of the union of $\Pb(mn,n)$, a standard $\Z_m$-teardrop with trivial $\Z_n$-action
corresponding points with $z_1 = 0$, and $\Pb(m,mn)$, a standard $\Z_n$-teardrop with trivial $\Z_m$-action corresponding to points
with $z_3 = 0$; the intersection of these sets is the orbit of $(0,1,0)$ which has isotropy $\Z_{mn}$.
In particular, as standard teardrops appear as sectors of $\Pb(m, mn, n)$ and are not global quotients,
$\Pb(m, mn, n)$ is itself not a global quotient.  See \cite{jiang} for a complete description of the $\Z$-sectors
(i.e. inertia orbifold) of weighted projective spaces.

By \cite[Equation 1.2]{bfnr}, composing the maps $z_1 \mapsto z_1^n$ and $z_3 \mapsto z_3^m$ induces a homeomorphism between
$\Pb(m, mn, n)$ and the standard projective space $\Pb(1,1,1) = \Pb^2$.  Using this homeomorphism, we have
$\chi\big(\Pb(m, mn, n)\big) = \chi(\Pb^2) = 3$.  Similarly, considering a simplicial decomposition subordinate to the singular set,
removing the union of two spheres that intersect at a point, and replacing
them with the singular set of $\Pb(m, mn, n)$, one computes that
\[
    \chi_{ES}\big(\Pb(m, mn, n)\big) = \frac{1}{n}\left(1 + \frac{1}{m}\right)
        + \frac{1}{m}\left(1 + \frac{1}{n}\right) - \frac{1}{mn} = \frac{1 + m + n}{mn}.
\]


\begin{example}[$\Pb(m,mn,n)$ with $\Gamma = \Z$]
\label{ex-WeightProjPZ}
To evaluate the expressions in Theorems \ref{th-GenerFunctGammaExtensStandardEulerCharacteristic}  and
\ref{th-GenerFunctGammaExtensEulerSatakeCharacteristic}, we will need to compute
$\chi_{H_r}(M \rtimes \T^1)$ and $\chi_{H_r}^{ES}(M \rtimes \T^1)$ where $H_r = \langle r \rangle \leq \Z$
denotes the unique subgroup $\Z$ of index $r$.  Noting that $H_r$ is isomorphic to $\Z$, it is sufficient to
consider $\chi_{\Z}(M \rtimes \T^1)$ and $\chi_{\Z}^{ES}(M \rtimes \T^1)$, i.e. the Euler and Euler--Satake
characteristics, respectively, of
\[
    \bigsqcup\limits_{(\theta)\in HOM(\Z,\T^1)/\T^1} M^{\langle\theta\rangle}\rtimes C_{\T^1}(\theta)
    =
    \bigsqcup\limits_{\theta\in HOM(\Z,\T^1)} M^{\langle\theta\rangle}\rtimes \T^1,
\]
where recall that $M = \Sp^5$.  Choose a generator $\zeta$ of $\Z_{mn} \leq \T^1$ and
note that $M^{\langle\theta\rangle} = \emptyset$ unless the image $\Imt(\theta)$ of $\theta$
is a subgroup of $\Z_{mn}$.  Moreover,
letting $M_1 = \{ (0, z_2, z_3) \} \subset M$, $M_3 = \{ (z_1, z_2, 0) \} \subset M$,
and $M_{13} = \{ (0, z_2, 0) \} \subset M$, one checks that for nontrivial $\theta$,
\begin{itemize}
\item   $M^{\langle\theta\rangle} = M_1$ if $\Imt(\theta) \leq \langle\zeta^m\rangle \cong \Z_n$, so
        $M^{\langle\theta\rangle} \rtimes \T^1$ presents $\Pb[mn, n]$;
\item   $M^{\langle\theta\rangle} = M_3$ if $\Imt(\theta) \leq \langle\zeta^n\rangle \cong \Z_m$, so
        $M^{\langle\theta\rangle} \rtimes \T^1$ presents $\Pb[m, mn]$; and
\item   $M^{\langle\theta\rangle} = M_{13}$ if $\Imt(\theta)$ is not contained in $\langle \zeta^n\rangle$ or $\langle \zeta^m\rangle$,
        so $M^{\langle\theta\rangle} \rtimes \T^1$ presents $\Pb[mn]$, a point with trivial $\Z_{mn}$-action.
\end{itemize}
By considering the image of a generator of $\Z$, one computes that of the $mn-1$ nontrivial elements of
$HOM(\Z,\Z_{mn})$, $m-1$ have image in $\langle\zeta^n\rangle$, $n-1$ have image in $\langle\zeta^m\rangle$,
with $(m-1)(n-1)$ remaining.  It follows that $\bigsqcup_{\theta\in HOM(\Z,\T^1)} M^{\langle\theta\rangle}\rtimes \T^1$
consists of
\begin{itemize}
\item   $\Pb(m,mn,n)$ corresponding to the trivial homomorphism with
        \[
            \chi\big(\Pb(m,mn,n)\big) = 3
            \quad\quad\mbox{and}\quad\quad
            \chi_{ES}\big(\Pb(m,mn,n)\big) = \frac{1 + m + n}{mn};
        \]
\item   $n-1$ copies of $\Pb(mn,n)$ with
        \[
            \chi\big(\Pb(mn,n)\big) = 2
            \quad\quad\mbox{and}\quad\quad
            \chi_{ES}\big(\Pb(mn,n)\big) = \frac{1}{n}\left(1 + \frac{1}{m}\right);
        \]
\item   $m-1$ copies of $\Pb(m,mn)$ with
        \[
            \chi\big(\Pb(m,mn)\big) = 2
            \quad\quad\mbox{and}\quad\quad
            \chi_{ES}\big(\Pb(m,mn)\big) = \frac{1}{m}\left(1 + \frac{1}{n}\right);
            \quad\mbox{and}
        \]
\item   $(m-1)(n-1)$ copies of $\Pb(mn)$ with
        \[
            \chi\big(\Pb(mn)\big) = 1
            \quad\quad\mbox{and}\quad\quad
            \chi_{ES}\big(\Pb(mn)\big) = \frac{1}{mn}.
        \]
\end{itemize}
Hence
\[
    \chi_{\Z}(M \rtimes \T^1)
    =
    3 + 2(n-1) + 2(m-1) + (m-1)(n-1)
    =
    m + n + mn
\]
and
\begin{align*}
    &\chi_{\Z}^{ES}(M \rtimes \T^1)
    \\&\quad =
    \frac{1 + m + n}{mn}
    +
    \frac{n-1}{n}\left(1 + \frac{1}{m}\right)
    +
    \frac{m-1}{m}\left(1 + \frac{1}{n}\right)
    +
    \frac{(m-1)(n-1)}{mn}
    =
    3.
\end{align*}
Note that this also follows from the fact that $\chi_\Z^{ES}(Q) = \chi(Q)$ for an arbitrary closed orbifold $Q$;
see \cite{farsiseaton3}.

With this, we have by Theorem \ref{th-GenerFunctGammaExtensStandardEulerCharacteristic}, Equation \eqref{eq-AbelGenFuncEC} that
\[
    \sum_{n\geq 0}q^n \,  \chi_\Z (M^n\rtimes \T^1({\mathcal S}_n))
    = \prod_{r \geq 1} (1- q^r)^{-(m + n + mn)},
\]
recovering \cite[Equation (18)]{farsiseaton3} in this case.  Similarly,
by Theorem \ref{th-GenerFunctGammaExtensEulerSatakeCharacteristic},
\[
    \sum_{n \geq 0}q^n \,  \chi^{ES}_\Z (M^n\rtimes \T^1({\mathcal S}_n))
    =
    \sum_{n \geq 0}q^n \,  \chi (M^n\rtimes \T^1({\mathcal S}_n))
    = \exp \sum_{n \geq 1} \frac{3q^n}{n}
    = (1 - q)^{-3},
\]
also recovering \cite[Equation (18)]{farsiseaton3}.
\end{example}


\begin{example}[$\Pb(m,mn,n)$ with $\Gamma = \Z^\ell$]
\label{ex-WeightProjPZell}
Let $H_r \leq \Z^\ell$ be a subgroup of finite index $r$, and let $h_1, \ldots, h_\ell$ denote a choice
of generators for $\Z^\ell$.  Expressing $H_r$ in Smith normal form, we have that
\[
    H_r = \langle h_1^{\alpha_1}, \ldots, h_\ell^{\alpha_\ell} \rangle
\]
for an ordered $\ell$-tuple $\alpha = (\alpha_1, \ldots, \alpha_\ell)$ of positive integers.
Then $r = \prod_{j=1}^\ell \alpha_j$ and $H_r \cong \Z^\ell$.  We let
$\mathcal{W}_r$ denote the set of $\ell$-tuples $\alpha$ such that
$\prod_{j=1}^\ell \alpha_j = r$; explicit formulas for the cardinality $|\mathcal{W}_r|$
are given in \cite[Lemma 5-6]{tamanoi2}.

Counting the images of generators as in Example \ref{ex-WeightProjPZ}, there are
$n^\ell - 1$ nontrivial $\theta\co\Z^\ell\to\Z_{mn}$ with image contained in
$\langle\zeta^m\rangle \cong \Z_n$, $m^\ell - 1$ nontrivial $\theta\co\Z^\ell\to\Z_{mn}$
with image contained in $\langle\zeta^n\rangle \cong \Z_m$, and
$(mn)^\ell - (m^\ell - 1) - (n^\ell - 1) - 1 = (m^\ell - 1)(n^\ell - 1)$
nontrivial $\theta\co\Z^\ell\to\Z_{mn}$ with image contained in neither
$\langle\zeta^m\rangle$ nor $\langle\zeta^n\rangle$.  As $M^{\langle\theta\rangle}\ltimes\T^1$
depends only on the image $\Imt(\theta)$, we have following the computations in
Example \ref{ex-WeightProjPZ} that
\begin{align*}
    \chi_{\Z^\ell}(M \rtimes \T^1)
    =   3 + 2(n^\ell - 1) + 2(m^\ell - 1) + (m^\ell - 1)(n^\ell - 1)
    =   m^\ell + n^\ell + (mn)^\ell,
\end{align*}
and
\begin{align*}
    &\chi_{\Z^\ell}^{ES}(M \rtimes \T^1)
    \\&\quad =
    \frac{1 + m + n}{mn}
    +
    \frac{n^\ell-1}{n}\left(1 + \frac{1}{m}\right)
    +
    \frac{m^\ell-1}{m}\left(1 + \frac{1}{n}\right)
    +
    \frac{(m^\ell-1)(n^\ell-1)}{mn}
    \\&\quad =
    m^{\ell-1} + n^{\ell-1} + (mn)^{\ell-1}.
\end{align*}
By Theorem \ref{th-GenerFunctGammaExtensStandardEulerCharacteristic}, Equation \eqref{eq-AbelGenFuncEC},
\[
    \sum_{n\geq 0}q^n \,  \chi_{\Z^\ell} (M^n\rtimes \T^1({\mathcal S}_n))
    = \prod_{r \geq 1} (1- q^r)^{-|\mathcal{W}_r| \big(m^\ell + n^\ell + (mn)^\ell\big)}
\]
and by Theorem \ref{th-GenerFunctGammaExtensEulerSatakeCharacteristic},
\[
    \sum_{n \geq 0}q^n \,  \chi^{ES}_{\Z^\ell} (M^n\rtimes \T^1({\mathcal S}_n))
    = \exp \sum_{n \geq 1} \frac{q^n}{n}
    |\mathcal{W}_r| \big(m^{\ell-1} + n^{\ell-1} + (mn)^{\ell-1}\big).
\]
Again, compare \cite[Equation (18)]{farsiseaton3}
\end{example}


\begin{example}[$\Pb(m,mn,n)$ with $\Gamma =\pi_1(\Sigma_{g+1})$]
\label{ex-WeightProjPZellGammaSurface}
Let $\Gamma = \pi_1(\Sigma_{g+1})$ be the fundamental group of a closed oriented
surface $\Sigma_{g+1}$ of genus $(g+1) \geq 1$, and note that when $g = 0$, $\pi_1(\Sigma_1) = \Z^2$.
For $g + 1 > 1$, this case is similar to that of $\Z^\ell$,
because every subgroup of $\Gamma$ of finite index is as well the fundamental group of
a closed oriented surface; see \cite[page 829]{tamanoi2}.
Specifically, a subgroup of index $r$ of $\pi_1(\Sigma_{g+1})$ is isomorphic to $\pi_1(\Sigma_{gr+1})$.
Let $j_{r}(\pi_1(\Sigma_{g+1}))$ denote the number of subgroups of $\pi_1(\Sigma_{g+1})$ of index
$r$, see \cite[page 830]{tamanoi2} for a discussion of calculability of $j_{r}(\pi_1(\Sigma_{g+1}))$
when $g+1 > 1$. Noting that $\pi_1(\Sigma_{gr+1})$ has $2({gr+1})$ generators, Theorem
\ref{th-GenerFunctGammaExtensStandardEulerCharacteristic}, Equation \eqref{eq-AbelGenFuncEC}
yields
\begin{align*}
    &\sum_{n\geq 0}q^n \,  \chi_{ \pi_1(\Sigma_{g+1})} (M^n\rtimes \T^1({\mathcal S}_n))
    \\&\quad\quad=
    \prod_{r \geq 1} (1- q^r)^{-j_{r}(\pi_1(\Sigma_{g+1}))  \big(m^{2({gr+1})}+ n^{2({gr+1})} + (mn)^{2({gr+1})} \big)},
\end{align*}
and Theorem \ref{th-GenerFunctGammaExtensEulerSatakeCharacteristic} yields
\small
\begin{align*}
    &\sum_{n \geq 0}q^n \,  \chi^{ES}_{\pi_1(\Sigma_{g+1})} (M^n\rtimes \T^1({\mathcal S}_n))
    \\&\quad\quad=
    \exp \sum_{n \geq 1} \frac{q^n}{n}
    j_{r}(\pi_1(\Sigma_{g+1}))  \big(m^{2 gr +1} + n^{2 gr +1} + (mn)^{2 gr +1}\big).
\end{align*}
\normalsize
\end{example}

\begin{remark}
\label{rem-OtherGamma}
Using the methods of Examples \ref{ex-WeightProjPZ} and \ref{ex-WeightProjPZell}, one may produce
explicit formulas for the extensions of the Euler and Euler--Satake characteristics of $\Pb(m, mn, n)$
and other weighted projective spaces associated to other finitely generate groups $\Gamma$.
For example, if $\Gamma\cong \Z^{\ell} \oplus \Z_{r_1}\oplus...\oplus  \Z_{r_s}$ is a
finitely generated abelian group, one can in principle compute Equations \eqref{eq-AbelGenFuncEC}
and \eqref{eq-GenerFunctGammaExtensEulerSatakeCharacteristic} using the above methods,
though some challenging bookkeeping involving the prime decompositions
of $m,n$ , and $r_j$, $j=1,...,s$,  might arise in keeping track
of homomorphisms from $\Gamma$ to $\Z_{nm}$.
Note that as $\T^1$ is abelian, the extensions associated to the free group with $\ell$ generators
coincide with those associated to $\Z^\ell$.
\end{remark}


\subsection{Extensions Associated to General $\Gamma$-Sets}
\label{subsec-AbstractTamanoiFunctEqautionWreathProducts}

In this section, we generalize  Definition
\ref{def-multInvAssocInvariants} to include extensions of
multiplicative orbifold invariants associated to arbitrary finite
$\Gamma$-sets where $\Gamma$ is a finitely generated discrete group.
This extends the definition given by \cite[Equation 6-13]{tamanoi2}.

\begin{definition}
\label{def-multInvarAssocGamIsomorClass}
Let $X$ be a finite $\Gamma$-set of order $n$ and
$\varphi$ a multiplicative orbifold invariant.  The extension
of $\varphi$ associated to the
$\Gamma$-isomorphism class $[X]$ of $X$ is defined by
\begin{align*}
    \varphi_{[X]} (M\rtimes G)
    &=     \ \sum_{[P \to X]} \varphi
            \left({\mathcal S}[P_{\rho}\times_G M]^{\Gamma}  \rtimes Aut_{\Gamma,\,G}^{P} \right)  \\
    &=     \sum_{[\theta] \in \pi^{-1}[X]} \varphi
            \left( ( M^n)^{\langle\theta\rangle}\rtimes C_{ G({\mathcal S}_n)}(\theta)  \right),
\end{align*}
where the first sum is over all isomorphism classes of
$\Gamma$-$G$-principal bundles over $X$, $\pi$ is as in Proposition \ref{prop-MultInvarGamSetX},
and ${\mathcal S}[P_{\rho}\times_G M]^{\Gamma}$ denotes the $\Gamma$-invariant sections.
\end{definition}

As in the case of $G$ finite, Theorem \ref{th-CrossedPdctBundleChr}({\it i}) implies
\begin{equation}
\label{eq-GenGammaSettoGammaExt}
\sum_{[X]}\, q^{|X|}\,  \varphi_{[X]} (M\rtimes G)  = \ \sum_{n \geq 0} \, q^n \, \varphi_{\Gamma}
\left( M^n\rtimes G({\mathcal S}_n)  \right),
\end{equation}
where  the first summation is over all isomorphism classes of
$\Gamma$-$G$-principal bundles over finite $\Gamma$-sets $X$.
For a finite $\Gamma$-set $X$, let
$X = \coprod_{(H)} r(H) \Gamma/H$  be
its decomposition into $\Gamma$-orbits where $(H)$ ranges over conjugacy classes
of isotropy groups, $r(H)$ is the number of $\Gamma$-orbits which
are isomorphic to $\Gamma/H$, and $r(H) \Gamma/H$ denotes the disjoint union
of these $r(H)$ isomorphic $\Gamma$-orbits. Then for a multiplicative orbifold
invariant $\varphi$, we have
\[
    \varphi_{[X]} (M\rtimes G) \ = \ \prod_{(H)} \varphi_{r(H)(\Gamma/ H)} (M\rtimes G).
\]
Combining this with Equation \eqref{eq-GenGammaSettoGammaExt} yields the following
interpretation of Theorem \ref{th-GeneratingFunctionGammaExtensionsFuncEqnNewTheo}
in terms of $\Gamma$-sets, which coincides with
\cite[Proposition 6-9]{tamanoi2} for the case of $G$ finite and follows its proof.

\begin{theorem}
\label{decompositionGenerFunction}
Let $\Gamma$ be a finitely generated discrete group, and let $M \rtimes G$
be a cc-presentation of the orbifold $Q$.
Let $\varphi$ be a multiplicative orbifold
invariant and let $X$ be a finite $\Gamma$-set.  With the
notation as above,
\begin{equation}
\label{eq-generatFunctDecomp}
\sum_{n \geq 0} \, q^n \, \varphi_{\Gamma}
\left( M^n\rtimes G({\mathcal S}_n)  \right) = \ \prod_{(H)} \left( \sum_{r \geq 0}q^{r\,|\Gamma/ H| }
\varphi_{r(\Gamma/ H)} (M\rtimes G)\right).
\end{equation}
The generating function of $\varphi_{r(\Gamma/H)}$  is given by
\begin{equation}
\label{eq-generatFunctDecompExplicit}
\sum_{r \geq 0} \, q^r \, \varphi_{r\, (\Gamma/H)}
\left( M\rtimes G  \right)
= \
\prod_{[\rho]}
\sum_{r \geq 0}q^{r }
\varphi\left(( M^{\langle\rho\rangle})^r\rtimes Aut_{\Gamma,\,G}^{P_{\rho}}(\mathcal{S}_r)\right),
\end{equation}
where the product over $[\rho]$ again ranges over $HOM(H,G)/(N_{\Gamma}(H) \times  G)$.
\end{theorem}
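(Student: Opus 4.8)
The plan is to follow the proof of \cite[Proposition 6--9]{tamanoi2}, checking that each nontrivial input --- the classification of $\Gamma$--$G$--principal bundles over disjoint unions of copies of $\Gamma/H$, the structure of their automorphism groups, and the multiplicativity of $\varphi$ over products of orbifold groupoids --- remains available in the cc--presentation setting.

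For Equation \eqref{eq-generatFunctDecomp}, I would begin from Equation \eqref{eq-GenGammaSettoGammaExt} and expand its left-hand side $\sum_{[X]} q^{|X|}\varphi_{[X]}(M\rtimes G)$ by sorting finite $\Gamma$--sets according to their orbit decomposition. Since $\Gamma$ is finitely generated, it has only finitely many conjugacy classes of subgroups of each finite index, so isomorphism classes of finite $\Gamma$--sets correspond bijectively to finitely supported functions $(H)\mapsto r(H)\in\Z_{\geq 0}$ via $X = \coprod_{(H)} r(H)(\Gamma/H)$, with $|X| = \sum_{(H)} r(H)\,|\Gamma/H|$. Substituting the identity $\varphi_{[X]}(M\rtimes G) = \prod_{(H)}\varphi_{r(H)(\Gamma/H)}(M\rtimes G)$ recorded just before the theorem and reorganizing, the sum factors as the advertised product; the finiteness remark ensures that this infinite product is a well-defined formal power series with each coefficient of $q^n$ a finite sum, and comparing with \eqref{eq-GenGammaSettoGammaExt} yields \eqref{eq-generatFunctDecomp}.

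For Equation \eqref{eq-generatFunctDecompExplicit}, I would fix a conjugacy class $(H)$ and analyze $\Gamma$--$G$--principal bundles over $r(\Gamma/H)$ directly through Definition \ref{def-multInvarAssocGamIsomorClass}. Such a bundle decomposes as a disjoint union of $r$ $\Gamma$--irreducible $G$--principal bundles over the $r$ copies of $\Gamma/H$, and by Theorem \ref{th-CrossedPdctBundleChr}({\it ii}) each irreducible summand is classified by a class $[\rho]\in HOM(H,G)/(N_\Gamma(H)\times G)$; hence isomorphism classes of bundles over $r(\Gamma/H)$ correspond to multiplicity functions $[\rho]\mapsto m_{[\rho]}\in\Z_{\geq 0}$ with $\sum_{[\rho]} m_{[\rho]} = r$. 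As in \cite[Sections 3 and 4]{tamanoi2}, whose arguments are purely group-theoretic and hold for arbitrary $G$, for such a bundle $P$ no automorphism can mix non-isomorphic irreducible summands, the summands isomorphic to $P_\rho$ contribute the wreath product $Aut_{\Gamma\mbox{-}G}^{P_\rho}(\mathcal{S}_{m_{[\rho]}})$, and $\mathcal{S}[P\times_G M]^\Gamma$ splits compatibly as $\prod_{[\rho]} (M^{\langle\rho\rangle})^{m_{[\rho]}}$. Multiplicativity of $\varphi$ then gives
\[
\varphi_{r(\Gamma/H)}(M\rtimes G) \,=\, \sum_{\sum_{[\rho]} m_{[\rho]} = r}\ \prod_{[\rho]}\varphi\!\left((M^{\langle\rho\rangle})^{m_{[\rho]}}\rtimes Aut_{\Gamma\mbox{-}G}^{P_\rho}(\mathcal{S}_{m_{[\rho]}})\right),
\]
and multiplying by $q^r$, summing over $r$, and interchanging sum and product exactly as in the proof of Theorem \ref{th-GeneratingFunctionGammaExtensionsFuncEqnNewTheo} produces \eqref{eq-generatFunctDecompExplicit}. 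Alternatively, once \eqref{eq-generatFunctDecomp} and Theorem \ref{th-GeneratingFunctionGammaExtensionsFuncEqnNewTheo} are both in hand, \eqref{eq-generatFunctDecompExplicit} can be recognized by matching the factor indexed by $(H)$ on the two sides, but the direct computation avoids cancellation of infinite products.

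The main obstacle is not the power-series bookkeeping but justifying the structural identifications for bundles over $r(\Gamma/H)$ --- the isomorphism $Aut_{\Gamma\mbox{-}G}^{P}\cong\prod_{[\rho]} Aut_{\Gamma\mbox{-}G}^{P_\rho}(\mathcal{S}_{m_{[\rho]}})$ and the compatible splitting of $\Gamma$--invariant sections. These are statements about $K$--$G$--bundles over finite $K$--sets that involve neither the topology of $M$ nor $\dim G$, so Tamanoi's arguments apply verbatim; the only ingredients beyond the finite-group case are that the quotients $(M^{\langle\rho\rangle})^{m_{[\rho]}}\rtimes Aut_{\Gamma\mbox{-}G}^{P_\rho}(\mathcal{S}_{m_{[\rho]}})$ are still orbifold groupoids --- which holds because $H$ has finite index in $\Gamma$ and $G$ acts locally freely, as observed after Definition \ref{def-multInvAssocInvariants} --- and that $\varphi$ is multiplicative on such products, which is available from \cite[Proposition 3.2]{farsiseaton3} and \cite[page 123]{moerdijkmrcun}.
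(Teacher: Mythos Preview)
Your proposal is correct and follows essentially the same approach as the paper's proof: the paper derives the product formula $\varphi_{[X]}(M\rtimes G) = \prod_{(H)}\varphi_{r(H)(\Gamma/H)}(M\rtimes G)$ by restricting a bundle $P\to X$ to each orbit-type piece $r(H)(\Gamma/H)$ and invoking multiplicativity, then combines with Equation~\eqref{eq-GenGammaSettoGammaExt} to get \eqref{eq-generatFunctDecomp}, and declares \eqref{eq-generatFunctDecompExplicit} ``analogous to the proof of Theorem~\ref{th-GeneratingFunctionGammaExtensionsFuncEqnNewTheo}'' following \cite[Proposition 6--9]{tamanoi2}. Your write-up is more explicit---you spell out the multiplicity-function bookkeeping and flag exactly which of Tamanoi's structural lemmas are being invoked and why they survive for general $G$---but the logical skeleton is the same.
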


\begin{proof}
Letting $P_H$ denote the restriction of a $\Gamma$-$G$-principal bundle $P \to X$ to
$r(H)(\Gamma/H)$, Definition \ref{def-multInvarAssocGamIsomorClass} becomes
\begin{align*}
    \varphi_{[X]} (M\rtimes G)
    &=
    \sum\limits_{[P \to X]} \prod\limits_{(H)} \varphi
    \left({\mathcal S}[P_{H}\times_G M]^{\Gamma}  \rtimes Aut_{\Gamma,\,G}^{P_H} \right)
                \\
    &=
    \prod\limits_{(H)}  \sum\limits_{[P_H]} \varphi
    \left({\mathcal S}[P_{H}\times_G M]^{\Gamma}  \rtimes
    Aut_{\Gamma,\,G}^{P_H} \right)
                \\
    &=
    \prod\limits_{(H)}  \varphi_{r(H) (\Gamma/H)}
    \left( M \rtimes G \right),
\end{align*}
where the sum over $[P_H]$ ranges over
the set of all isomorphism classes
of $\Gamma$-$G$-principal bundles over the
$\Gamma$-set $r(H)(\Gamma/H)$.
Equation \eqref{eq-generatFunctDecomp} follows.

The proof of Equation \eqref{eq-generatFunctDecompExplicit} is analogous to the proof of Theorem
\ref{th-GeneratingFunctionGammaExtensionsFuncEqnNewTheo},
and continues to follow \cite[Proposition 6-9]{tamanoi2}.
For a $\Gamma$-$G$-bundle over the disjoint union of $r$ copies of $\Gamma/H$,
let $r_{\rho}$ be the number of irreducible $\Gamma$-$G$ bundles $P_{\rho}\to \Gamma/H$  appearing in the irreducible
decomposition of $P$, where $[\rho]$  ranges over $HOM(H,G)/(N_{\Gamma}(H) \times  G)$. Then
\begin{align*}
    \sum_{r \geq 0} \, q^r \, \varphi_{r\, (\Gamma/H)}
    \left( M\rtimes G  \right)
    &=
    \sum_{r \geq 0}q^{r }
    \sum_{\sum_{[\rho]}r_{\rho} = r}\prod_{[\rho]}
    \varphi\left(( M^{\langle {\rho}\rangle})^ {r_{\rho}}\rtimes Aut_{\Gamma,\,G}^{P_{\rho}}(\mathcal{S}_{r_{\rho}})\right)
    \\&=
    \sum_{r_{\rho} \geq 0}\prod_{[\rho]}\, q^{r_{\rho} }
    \varphi\left(( M^{\langle {\rho}\rangle})^ {r_{\rho}}\rtimes Aut_{\Gamma,\,G}^{P_{\rho}}(\mathcal{S}_{r_{\rho}})\right)
    \\&=
    \prod_{[{\rho}]}\, \left[ \sum_{r_{\rho} \geq 0}\,  q^{r_{\rho} }
    \varphi\left(( M^{\langle {\rho}\rangle})^ {r_{\rho}}\rtimes Aut_{\Gamma,\,G}^{P_{\rho}}(\mathcal{S}_{r_{\rho}})\right) \right].
    \qedhere
\end{align*}
\end{proof}


\subsection{The $H$-Inertia}
\label{subsec-wrlimynewIIItheHinertia}

In this section, we give a generalized sector construction interpreting
the extensions of multiplicative orbifold invariants associated to transitive $\Gamma$-sets
$\Gamma/H$ as evaluation on sectors.

Consider the action of $N_{\Gamma}(H) \times G$ on $HOM(H, G)$ defined in
Equation \eqref{eq-NormTimesGAction}.  As noted in Subsection
\ref{subsec-generalTamanoiFunctEqautionWreathProducts}, if $T_\rho$
denotes the stabilizer of $\rho \in HOM(H, G)$ in $N_\Gamma(H)
\times G$, then $Aut^{P_{\rho}}_{\Gamma,\,G}$ is isomorphic to $H
\backslash T_{\rho}$, and the action of $Aut^{P_{\rho}}_{\Gamma,\,G}$ on $M^{\langle \rho \rangle}$ depends only on the $G$-factor.
Let
\[
\mathcal{A} = \coprod_{[\rho] \in HOM(H, G)/(N_{\Gamma}(H) \times
G)} [ M^{\langle\rho\rangle} \rtimes  Aut^{P_{\rho}}_{\Gamma,\,G} ].
\]
Then as the action of each $Aut^{P_{\rho}}_{\Gamma,\,G}$ on $M^{\langle\rho\rangle}$
is locally free (see Definition \ref{def-multInvAssocInvariants} and the comments following it),
$\mathcal{A}$ is an orbifold groupoid, and
$\varphi_{(\Gamma/H)}$ is the application of $\varphi$ to the
groupoid $\mathcal{A}$.

As noted in the proof of Lemma
\ref{lem-GenerFunctGammaExtensStandardEulerCharacteristic}, when $\Gamma$ is abelian,
$N_{\Gamma}(H)= \Gamma$ and $T_{\rho} = \Gamma \times_\rho C_G(\rho)$,
so that $Aut^{P_{\rho}}_{\Gamma,\,G} = H \backslash (\Gamma \times_\rho C_G(\rho))$
and $HOM(H, G)/(N_{\Gamma}(H) \times G)= HOM(H, G)/G$.
Therefore the associated groupoid $\mathcal A$ reduces to the product $(M\rtimes G)^\Gamma \times H \backslash
\Gamma$, where $(M\rtimes G)^\Gamma$ is the groupoid of
$\Gamma$-sectors. When in addition $H = \Gamma$,
$Aut^{P_{\rho}}_{\Gamma,\,G} = C_G(\rho)$ so that $\mathcal A$
reduces to the groupoid of $\Gamma$-sectors.

In general, if  $H = \Gamma$ we claim that each connected component
of $\mathcal{A}$ is isomorphic to a $\Gamma$-sector of
$M \rtimes G$, possibly with different multiplicities. In this
case we have $N_{\Gamma}(H)= \Gamma$, $T_{\rho} = \Gamma
\times_{\rho} C_G(\rho)$, and $Aut^{P_{\rho}}_{\Gamma,\,G} =
C_G(\rho)$. Then
\[
    \mathcal{A}
    =
    \coprod_{[\rho] \in HOM(H, G)/(N_{\Gamma}(H) \times G)}
    [ M^{\langle\rho\rangle} \rtimes  C_G(\rho) ],
\]
and hence that each $[\rho]$ corresponds to a union of $\Gamma$-sectors; see
\cite{farsiseaton2}.  As
\[
    (M\rtimes G)^\Gamma
    =
    (M\rtimes G)^H
    =
    \coprod_{(\rho) \in HOM(H, G)/G}
    [ M^{\langle\rho\rangle} \rtimes  C_G(\rho) ]
\]
presents the orbifold of $\Gamma$-sectors,
we see that $\mathcal{A}$ simply identifies $\Gamma$-sectors that are isomorphic via
an element of $N_{\Gamma}(H)$.


\section{Decomposable Functors and Wreath Products}
\label{sec-DecomposableFunctorsAndWreathProducts}

In this section, we assume the orbifold $Q$ is a global quotient orbifold of the form
$M \rtimes G$ so that $G$ is a \emph{finite} group.
We use a modification of a formal functorial functional equation
of Dress and M\"uller \cite{dressmuller} for decomposable functors to determine
a relationship between $\chi_\Gamma^{ES}$ and $\chi_{(\Gamma/H)}^{ES}$.
Note that we modify their approach to replace counting functions for finite sets with
functions related to the invariant $\chi_{ES}$ as defined below.
We follow the notation of \cite[Section 1]{dressmuller}.
Note that the results of this section also follow from \cite{dressmullerII} choosing the Euler--Satake
characteristic as a weight, though the authors were not aware of this when this paper was first prepared.

Fix a global quotient orbifold $M \rtimes G$ and a finitely generated discrete group $\Gamma$.
By Theorem \ref{th-CrossedPdctBundleChr}({\it i}), there is a bijection between the
isomorphism classes of $\Gamma$-$G$-principal bundles over $\Gamma$-sets of
order $n$ and the conjugacy classes of homomorphisms into the wreath
product $G(\mathcal{S}_n)$, i.e. elements of
$HOM(\Gamma, G(\mathcal{S}_n))/G(\mathcal{S}_n)$.  Given a $\Gamma$-$G$-principal bundle
$P$, we let $(\theta)_P \in HOM(\Gamma, G(\mathcal{S}_n))/G(\mathcal{S}_n)$ denote the corresponding conjugacy class.
We will sometimes abuse notation and refer to a homomorphism $\theta_P$, which is defined only up to conjugation.

\begin{definition}
\label{def-functorGGbundles}
Let $\mbox{Ens}$ denote the category with finite sets as objects and
bijective mappings as morphisms, and let $\widetilde{\mbox{Ens}}$ denote the category
with finite sets as objects and injective mappings as morphisms.  For a fixed finitely
generated discrete group $\Gamma$ and finite group $G$, define the covariant functor
\[
    \mathcal{F}_{\Gamma,G} \co \mbox{Ens} \longrightarrow \mbox{Ens}
\]
by assigning to the finite set $\Omega$  (which is given the discrete topology)
the finite set of $\Gamma$-$G$-bundles with total space $\Omega \times G$.  We adopt the convention that
$\mathcal{F}_{\Gamma, G}(\emptyset)$ consists of a single ``empty bundle" corresponding
to the trivial homomorphism from $\Gamma$ into the trivial group.
\end{definition}

Consider the functors
\[
    \mathcal{F}_{\Gamma, G} \times \mathcal{F}_{\Gamma, G} \co \mbox{Ens}^2
    \stackrel{\mathcal{F}_{\Gamma, G}^2}{\longrightarrow} \mbox{Ens}^2
    \stackrel{\times}{\longrightarrow} \mbox{Ens} \stackrel{\imath}{\longrightarrow}
    \widetilde{\mbox{Ens}}
\]
and
\[
   \mathcal{F}_{\Gamma, G} \times \sqcup \co
   \mbox{Ens}^2 \stackrel{\sqcup}{\longrightarrow} \mbox{Ens}
    \stackrel{\mathcal{F}_{\Gamma, G}}{\longrightarrow} \mbox{Ens}
    \stackrel{\imath}{\longrightarrow}
    \widetilde{\mbox{Ens}}
\]
where $\times$ is the Cartesian product functor, $\sqcup$ is the (disjoint) union functor,
and $\imath$ is the natural inclusion functor.  That is,
$\mathcal{F}_{\Gamma, G} \times \mathcal{F}_{\Gamma, G} (\Omega_1, \Omega_2)$
is the finite set of pairs $(P_1, P_2)$ where
$P_1 \in \mathcal{F}_{\Gamma, G} (\Omega_1)$
is a $\Gamma$-$G$-principal bundle over $\Omega_1$ with total space $\Omega_1 \times G$,
$P_2 \in \mathcal{F}_{\Gamma, G} (\Omega_2)$
is a $\Gamma$-$G$-principal bundle over $\Omega_2$ with total space $\Omega_2 \times G$, and
$(\mathcal{F}_{\Gamma, G} \times \sqcup) (\Omega_1, \Omega_2)$
is the finite set of $\Gamma$-$G$-principal bundles in
$\mathcal{F}_{\Gamma, G}(\Omega_1 \sqcup \Omega_2)$.

Define a natural transformation
$\eta\co\mathcal{F}_{\Gamma, G}\times\mathcal{F}_{\Gamma, G}
\to \mathcal{F}_{\Gamma, G}\times\sqcup$
as follows.  To each pair $(\Omega_1, \Omega_2)$ of finite sets, we assign the morphism
\[
    (\mathcal{F}_{\Gamma, G} \times \mathcal{F}_{\Gamma, G}) (\Omega_1, \Omega_2)
    \stackrel{\eta_{(\Omega_1, \Omega_2)}}{\longrightarrow}
    (\mathcal{F}_{\Gamma, G} \times \sqcup) (\Omega_1, \Omega_2)
\]
such that $\eta_{(\Omega_1, \Omega_2)}(P_1, P_2) = P_1 \sqcup P_2$
as a $\Gamma$-$G$-principal bundle over $\Omega_1 \sqcup \Omega_2$ with total space $(\Omega_1 \sqcup \Omega_2) \times G$.
It is straightforward to show that $\eta$ is a weak decomposition of the functor
$\mathcal{F}_{\Gamma, G}$ as defined in \cite[page 192]{dressmuller}.
As well, we define $\mathcal{F}_{\Gamma, G}^\eta(\Omega)$ to be the collection
of $\Gamma$-$G$-principal bundles in $\mathcal{F}_{\Gamma, G}(\Omega)$ that are not in the image
of $\eta$ for some partition $\Omega = \Omega_1 \sqcup \Omega_2$ with
$\Omega_1, \Omega_2 \neq \emptyset$, i.e.
\[
    \mathcal{F}_{\Gamma, G}^\eta(\Omega)
    =
    \begin{cases}
        \mathcal{F}_{\Gamma, G}(\Omega) \smallsetminus
        \bigcup\limits_{\substack{\Omega=\Omega_1 \sqcup \Omega_2, \\ \Omega_1, \Omega_2 \neq \emptyset}}
        \eta(\mathcal{F}_{\Gamma, G}(\Omega_1) \times \mathcal{F}_{\Gamma, G}(\Omega_2)),
                    &       \Omega \neq \emptyset               \\
        \emptyset,  &       \Omega = \emptyset.
            \end{cases}
\]
Then $\mathcal{F}_{\Gamma, G}^\eta(\Omega)$ is the collection of \emph{irreducible}
$\Gamma$-$G$-bundles in $\mathcal{F}_{\Gamma, G}(\Omega)$.

Recall that $\mathbf{n}$ denotes the set $\{ 1, 2, \ldots, n \}$.
Given a pair $(P_1, P_2)$ of $\Gamma$-$G$-principal bundles over
$\mathbf{r}$ and $\mathbf{s}$, respectively, with $n = r + s$ and identifying
$\mathbf{n}$ with $\mathbf{r} \sqcup \mathbf{s}$, choose homomorphisms
$\theta_1 \in HOM(\Gamma, G(\mathcal{S}_r))$ and
$\theta_2 \in HOM(\Gamma, G(\mathcal{S}_s))$ such that $(\theta_i)$ is the conjugacy
class associated to the isomorphism class $[P_i]$ of $P_i$ by Theorem \ref{th-CrossedPdctBundleChr}({\it i}).
Then the $\Gamma$-$G$-principal bundle
$\eta_{(\Omega_1, \Omega_2)}(P_1, P_2)$
corresponds to the homomorphism
$\theta\co \Gamma \to G(\mathcal{S}_r) \times G(\mathcal{S}_s) \leq G(\mathcal{S}_{r+s})$
given by $\theta(\gamma) = \theta_1(\gamma)\theta_2(\gamma)$, where
$\theta_1(\gamma)$ and $\theta_2(\gamma)$ are considered elements of the first
and second factors, respectively, of
$G(\mathcal{S}_r) \times G(\mathcal{S}_s) \leq G(\mathcal{S}_{r+s})$.
Note that the fixed point set of $\theta$ in $M^{r+s}$ is
\begin{equation}
\label{eq-DMFixedPoint}
    (M^{r+s})^{\langle\theta\rangle} =
    (M^r)^{\langle\theta_1\rangle} \times (M^s)^{\langle\theta_2\rangle}.
\end{equation}
The sets in Equation \eqref{eq-DMFixedPoint} of course depend on the choice of $\theta_1$ and $\theta_2$,
but their diffeomorphism-type depends only on the corresponding conjugacy classes
and hence on the isomorphism classes of the bundles $P_1$ and $P_2$.

For a global quotient orbifold $M \rtimes G$ where $M$ is a closed manifold,
define  the formal power series
\begin{align*}
    \Phi(q)
    &=      \sum\limits_{n \geq 1}\;
            \sum\limits_{P \in \mathcal{F}_{\Gamma, G}(\mathbf{n})}  \;
                \frac{q^n}{n! |G|^n}
            \chi \left((M^n)^{\langle \theta_P \rangle}\right)
            \quad\mbox{and}
    \\
    \Psi(q)
    &=      1 + \sum\limits_{n \geq 1} \;
            \sum\limits_{P \in \mathcal{F}_{\Gamma, G}^\eta(\mathbf{n})} \;
                \frac{q^n}{n! |G|^n}
            \chi \left((M^n)^{\langle \theta_P \rangle}\right).
\end{align*}
Note that $\chi \left((M^n)^{\langle \theta_P \rangle}\right)$ depends only on the conjugacy class of $\theta_P$
so that both series are well defined.

Now, to link $\Phi$ and $ \Psi$  with the invariants
introduced in Definition \ref{def-multInvAssocInvariants}, define the functions
\begin{align*}
    \phi_{\Gamma, M\rtimes G}^\eta    &\co \Z_{\geq 0}  \longrightarrow   \Q \quad\mbox{and}        \\
    \psi_{\Gamma, M\rtimes G}         &\co \Z_{\geq 0}  \longrightarrow   \Q
\end{align*}
by setting
\begin{align*}
    \phi_{\Gamma, M\rtimes G}^\eta(n)
        &= \sum\limits_{P \in \mathcal{F}_{\Gamma, G}^\eta(\mathbf{n})}
            \frac{1}{|G|^n}\chi \left((M^n)^{\langle \theta_P \rangle}\right)
        \quad\quad\mbox{and}
        \\
    \psi_{\Gamma, M\rtimes G}(n)
        &= \sum\limits_{P \in \mathcal{F}_{\Gamma, G}(\mathbf{n})}
            \frac{1}{|G|^n}\chi \left((M^n)^{\langle \theta_P \rangle}\right).
\end{align*}
As a convention, we set $\phi_{\Gamma, M\rtimes G}^\eta(0) = 0$ and $\psi_{\Gamma, M\rtimes G}(0) = 1$.
Then
\[
    \Phi(q) =   \sum\limits_{n\geq 0} \phi_{\Gamma, M\rtimes G}^\eta(n) \frac{q^n}{n!}
    \;\;\;\mbox{and}\;\;\;
    \Psi(q) =   \sum\limits_{n\geq 0} \psi_{\Gamma, M\rtimes G}(n) \frac{q^n}{n!}.
\]
The relationships between the series $\Phi$ and $\Psi$ and extensions of the Euler--Satake characteristic
are indicated by the simple computations below.

The $G(\mathcal{S}_n)$-conjugacy class $(\theta)$ of a $\theta\co\Gamma\to G(\mathcal{S}_n)$
contains
$\frac{|G(\mathcal{S}_n)|}{|C_{G(\mathcal{S}_n)}(\theta)|} = \frac{|G|^n n!}{|C_{G(\mathcal{S}_n)}(\theta)|}$
elements.  Hence,
\begin{align}
    \nonumber
    \Psi(q)
    &=
        \sum\limits_{n\geq 0} \frac{q^n}{n!}
        \sum\limits_{P \in \mathcal{F}_{\Gamma, G}(\mathbf{n})}
            \frac{1}{|G|^n}\chi \left((M^n)^{\langle \theta_P \rangle}\right)
        \\ \nonumber
    &=
        \sum\limits_{n\geq 0} \frac{q^n}{n!}
        \sum\limits_{(\theta) \in HOM(\Gamma, G(\mathcal{S}_n))/G(\mathcal{S}_n)}\;\;\;
        \sum\limits_{\tau \in (\theta)}
            \frac{1}{|G|^n}\chi \left((M^n)^{\langle \tau \rangle}\right)
        \\ \label{eq-DMPsi}
    &=
        \sum\limits_{n\geq 0} \frac{q^n}{n!}
        \sum\limits_{(\theta) \in HOM(\Gamma, G(\mathcal{S}_n))/G(\mathcal{S}_n)}
            \left(\frac{|G|^n n!}{|C_{G(\mathcal{S}_n)}(\theta)|}\right)
            \frac{1}{|G|^n}\chi \left((M^n)^{\langle \theta \rangle}\right)
        \\ \nonumber
    &=
        \sum\limits_{n\geq 0} q^n
        \sum\limits_{(\theta) \in HOM(\Gamma, G(\mathcal{S}_n))/G(\mathcal{S}_n)}
            \frac{\chi \left((M^n)^{\langle \theta \rangle}\right)}{|C_{G(\mathcal{S}_n)}(\theta)|}
        \\ \nonumber
    &=
        \sum\limits_{n\geq 0}q^n
        \sum\limits_{(\theta) \in HOM(\Gamma, G(\mathcal{S}_n))/G(\mathcal{S}_n)}
            \chi_{ES}\left((M^n)^{\langle \theta \rangle} \rtimes C_{G(\mathcal{S}_n)}(\theta)\right)
        \\ \nonumber
    &=
        \sum\limits_{n\geq 0}q^n\chi_\Gamma^{ES}(M^n \rtimes (G(\mathcal{S}_n))).
\end{align}

The same computation shows that
\[
    \Phi(q)
    =
        \sum\limits_{n\geq 0}q^n
        \sum\limits_{(\theta) \in HOM(\Gamma, G(\mathcal{S}_n))^t/G(\mathcal{S}_n)}
            \chi_{ES}\left((M^n)^{\langle \theta \rangle} \rtimes C_{G(\mathcal{S}_n)}(\theta)\right),
\]
where $HOM(\Gamma, G(\mathcal{S}_n))^t/G(\mathcal{S}_n)$ denotes the conjugacy classes
$(\theta)$ of homomorphisms such that the image of $\pi(\theta)$ acts transitively on $\mathbf{n}$,
or equivalently such that the associated isomorphism class of $\Gamma$-$G$-principal bundles is irreducible.
As isomorphism classes of finite, transitive $\Gamma$-sets correspond to conjugacy classes of
subgroups $H \leq \Gamma$ of finite index, this becomes
\[
        \sum\limits_{n\geq 0}q^n
        \sum\limits_{(H_n)}\;\;
        \sum\limits_{(\theta) \in \pi^{-1}((\Gamma/H))}
            \chi_{ES}\left((M^n)^{\langle \theta \rangle} \rtimes C_{G(\mathcal{S}_n)}(\theta)\right),
\]
where the second sum ranges over all $\Gamma$-conjugacy classes $(H_n)$ of subgroups $H_n \leq \Gamma$ of index
$n$ and $\pi \co HOM(\Gamma, G({\mathcal S}_n))/ G({\mathcal S}_n ) \to HOM(\Gamma, {\mathcal S}_n)/ {\mathcal S}_n$
is as in Proposition \ref{prop-MultInvarGamSetX}; by the same Proposition, this is equal to
\begin{equation}
\label{eq-DMPhi}
    \Phi(q)
    =
    \sum\limits_{n\geq 1}q^n
    \sum\limits_{(H): |\Gamma/H|=n}\;\;
    \chi_{(\Gamma/H)}^{ES} (M \rtimes G).
\end{equation}

We are now ready to prove the following.

\begin{theorem}
\label{th-DressMuellerGen}
Let $\Gamma$ be a finitely generated discrete group, let $M$ be a compact manifold,
and let $M\rtimes G$ be a presentation of the global quotient orbifold $Q$ so that $G$ is finite.
Then with the definitions given above, we have
\[
    \Psi(q) =   \exp(\Phi(q)),
\]
i.e., applying Equations \eqref{eq-DMPsi} and \eqref{eq-DMPhi},
\begin{equation}
\label{eq-DMFormula}
        \sum\limits_{n\geq 0}q^n\chi_\Gamma^{ES}( M^n \rtimes G(\mathcal{S}_n)) =
        \exp\left( \sum\limits_{n\geq 1}q^n
            \sum\limits_{(H): |\Gamma/H|=n}\;\; \chi_{(\Gamma/H)}^{ES} ( M \rtimes G)\right).
\end{equation}
\end{theorem}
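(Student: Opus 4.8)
The plan is to recognize that this is a direct application of the Dress--M\"uller functorial functional equation for decomposable functors \cite[Theorem 1.1]{dressmuller} (or its exponential form), adapted so that the ``counting'' data attached to each finite set is replaced by the rational-valued weights $\frac{1}{|G|^n}\chi((M^n)^{\langle\theta\rangle})$. Having already shown that $\eta$ is a weak decomposition of $\mathcal{F}_{\Gamma,G}$, the key structural fact is multiplicativity of the weight under $\eta$: by Equation \ref{eq-DMFixedPoint} and multiplicativity of $\chi$ on products, if $P_{(\theta)} = \eta_{(\mathbf{r},\mathbf{s})}(P_{(\theta_1)}, P_{(\theta_2)})$ then
\[
    \frac{1}{|G|^{r+s}}\chi\bigl((M^{r+s})^{\langle\theta\rangle}\bigr)
    =
    \frac{1}{|G|^{r}}\chi\bigl((M^{r})^{\langle\theta_1\rangle}\bigr)\cdot
    \frac{1}{|G|^{s}}\chi\bigl((M^{s})^{\langle\theta_2\rangle}\bigr).
\]
This is precisely the compatibility condition that makes $\Psi$ and $\Phi$ (the exponential generating functions of the total and irreducible weights) satisfy $\Psi = \exp(\Phi)$ in the formalism of \cite{dressmuller}.

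First I would state the weighted version of the Dress--M\"uller lemma that I need: given a covariant functor $\mathcal{F}\co \mathrm{Ens} \to \mathrm{Ens}$ with weak decomposition $\eta$ and a multiplicative weight $w$ (meaning $w$ assigned to $\eta(P_1,P_2)$ equals the product of the weights of $P_1$ and $P_2$, and $w$ is invariant under the $\mathrm{Ens}$-morphisms, i.e. depends only on isomorphism class), the generating functions $\Psi_w(q) = \sum_n \frac{q^n}{n!}\sum_{P\in\mathcal{F}(\mathbf{n})} w(P)$ and $\Phi_w(q) = \sum_n \frac{q^n}{n!}\sum_{P\in\mathcal{F}^\eta(\mathbf{n})} w(P)$ satisfy $\Psi_w = \exp(\Phi_w)$. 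The proof of this is the standard exponential formula / species argument: every $\Gamma$--$G$--principal bundle over $\mathbf{n}$ decomposes uniquely (up to reordering the blocks of a set partition) into irreducible sub-bundles over the blocks, the weight factors as the product of the weights of the irreducible pieces, and summing over set partitions with the $\frac{1}{n!}$ normalization converts the product structure into composition of exponential generating functions. I would either cite this directly from \cite{dressmuller} with the observation that their proof only uses finiteness of fibers through the multiplicativity of the relevant counting function --- which our weight $w(P) = \frac{1}{|G|^n}\chi((M^n)^{\langle\theta\rangle})$ shares by the displayed equation above --- or reproduce the short partition-sum argument.

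Then the theorem follows by substitution: Equation \ref{eq-DMPsi} identifies $\Psi(q)$ with $\sum_{n\geq 0} q^n \chi_\Gamma^{ES}(MG(\mathcal{S}_n))$, and the computation culminating in Equation \ref{eq-DMPhi} identifies $\Phi(q)$ with $\sum_{n\geq 1} q^n \sum_{(H):[\Gamma:H]=n} \chi_{(\Gamma/H)}^{ES}(MG(\mathcal{S}_n))$. Plugging these into $\Psi = \exp(\Phi)$ yields Equation \ref{eq-DMFormula} verbatim. The one point requiring care --- and the main obstacle --- is verifying that the Dress--M\"uller machinery genuinely applies with weights in $\Q$ rather than with honest cardinalities of finite sets: their notion of a decomposable functor and the proof of the functional equation are phrased in terms of counting elements of $\mathcal{F}(\mathbf{n})$, so one must check that no step secretly uses that the weight of each object is $1$ (equivalently, a positive integer). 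The resolution is that the exponential-formula proof only needs (i) the index set $\mathcal{F}(\mathbf{n})$ to be finite for each $n$ --- true, since there are finitely many conjugacy classes of homomorphisms $\Gamma \to G(\mathcal{S}_n)$ --- and (ii) the weight to be isomorphism-invariant and multiplicative under $\eta$, both of which we have established; the $\frac{1}{|G|^n}$ normalization and the use of $\chi$ are inert. Once this observation is made explicit, the rest is bookkeeping already carried out above.
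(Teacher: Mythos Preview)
Your proposal is correct and follows essentially the same approach as the paper: both recognize that the Dress--M\"uller machinery applies once one checks that the weighted counting function is multiplicative under $\eta$ (via Equation \ref{eq-DMFixedPoint} and multiplicativity of $\chi$), and that this multiplicativity is the only property of the ``counting'' that the relevant part of their argument actually uses. The paper makes this explicit by reproducing Dress--M\"uller's step (xi) concretely---deriving the recursion $\psi(n)=\sum_{\mu=1}^n\binom{n-1}{\mu-1}\phi^\eta(\mu)\psi(n-\mu)$ from the disjoint-union decomposition of $\mathcal{F}_{\Gamma,G}(\mathbf{n})$ and then passing to the differential equation $\Psi'=\Phi'\Psi$---whereas you sketch the equivalent species/partition-sum route; but these are two standard presentations of the same exponential-formula argument, and you correctly flag that either one works.
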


\begin{remark}
Note that if we apply \cite[Theorem 1]{dressmullerII} choosing the weight $\omega$
to be the Euler--Satake characteristic, we have
\[
    \sum_{[X]}\, q^{|X|}\, \chi_{[X]}^{ES} (M\rtimes G) =
    \exp\left( \sum\limits_{n\geq 1}q^n
    \sum\limits_{(H): [\Gamma:H]=n}\;\; \chi_{(\Gamma/H)}^{ES} ( M \rtimes G)\right).
\]
Along with Equation \eqref{eq-GenGammaSettoGammaExt}, this as well yields
Equation \eqref{eq-DMFormula}.  Recall that $\chi_{(\Gamma/H)}^{ES}$ corresponds to the application of
$\chi^{ES}$ to the groupoid $\mathcal{A}$ defined in Section \ref{subsec-wrlimynewIIItheHinertia}.
\end{remark}

\begin{proof}
This result is an analog of \cite[Equation (1.6)]{dressmuller}, which is proven for
an arbitrary decomposable functor $\mathcal{F}$, but defining the functions
$\phi_{\Gamma, M\rtimes G}$ and $\psi_{\Gamma, M\rtimes G}$ to be the
counting functions for finite sets.  Here, we illustrate that Dress and M\"uller's arguments
apply to extensions of the Euler--Satake characteristic as defined above.
Their proof of this result is separated into
parts (i), (iii), (iv), (v), (vii), (viii), and (xi); only (xi) refers to the counting
functions (note that (ii), (vi), and (ix) are used to prove a separate result).
As our functors $\mathcal{F}_{\Gamma, G}$ and $\mathcal{F}_{\Gamma, G}^\eta$
are special cases of theirs, their results apply and we need only verify (xi).

By \cite[(vii) and (viii)]{dressmuller}, for any finite set $\Omega$ and any fixed element
$\omega \in \Omega$, we have
\[
    \mathcal{F}_{\Gamma, G}(\Omega)
    =
    \bigcup\limits_{\Omega_1\co\omega \in \Omega_1 \subseteq \Omega}
    \eta\left(\mathcal{F}_{\Gamma, G}^\eta(\Omega_1)
    \times \mathcal{F}_{\Gamma, G}(\Omega \smallsetminus \Omega_1)\right),
\]
and the right-side of this equation is a disjoint union. As
each $\eta_{(\Omega_1, \Omega\smallsetminus\Omega_1)}$ is
injective, we can use this decomposition, Equation
\eqref{eq-DMFixedPoint}, and the multiplicativity of $\chi$ to
rewrite $\psi_{\Gamma, M\rtimes G}(n)$ as
\begin{align*}
        &       \sum\limits_{P \in \mathcal{F}_{\Gamma, G}(\mathbf{n})}
                \frac{1}{|G|^n}\chi \left((M^n)^{\langle \theta_P \rangle}\right)             \\
        &\quad=      \sum\limits_{\Omega_1 : 1 \in \Omega_1 \subseteq \mathbf{n}} \;\;\;
                \sum\limits_{P \in \eta(\mathcal{F}_{\Gamma, G}^\eta(\Omega_1)
                \times \mathcal{F}_{\Gamma, G}(\mathbf{n} \smallsetminus \Omega_1))}
                \frac{1}{|G|^n}\chi \left((M^n)^{\langle \theta_P \rangle}\right)             \\
        &\quad=      \sum\limits_{\Omega_1 : 1 \in \Omega_1 \subseteq \mathbf{n}} \;\;\;
                \sum\limits_{P_1 \in \mathcal{F}_{\Gamma, G}^\eta(\Omega_1)}\;\;\;
                \sum\limits_{P_2 \in \mathcal{F}_{\Gamma, G} (\mathbf{n}\smallsetminus \Omega_1)}
                \frac{1}{|G|^n}\chi \left((M^n)^{\langle \theta_{P_1}\theta_{P_2} \rangle}\right)   \\
        &\quad=      \sum\limits_{\Omega_1 : 1 \in \Omega_1 \subseteq \mathbf{n}} \;\;\;
                \sum\limits_{P_1 \in \mathcal{F}_{\Gamma, G}^\eta(\Omega_1)}
                \frac{1}{|G|^\mu}\chi \left((M^\mu)^{\langle \theta_{P_1} \rangle}\right)       \\
        &\quad\quad\quad
                \sum\limits_{P_2 \in \mathcal{F}_{\Gamma, G} (\mathbf{n}\smallsetminus \Omega_1)}
                \frac{1}{|G|^{n-\mu}}\chi \left((M^{n-\mu})^{\langle \theta_{P_2} \rangle}\right)   \\
        &\quad       \mbox{(where $\mu$ is the cardinality of $\Omega_1$)}                                   \\
        &\quad=      \sum\limits_{\Omega_1 : 1 \in \Omega_1 \subseteq \mathbf{n}} \;
                \phi^\eta(\mu) \psi(n-\mu).
\end{align*}
In particular, the expression
$\phi_{\Gamma, M\rtimes G}^\eta(\mu) \psi_{\Gamma, M\rtimes G}(n - \mu)$ depends only
on the cardinality of $\Omega_1$.

For each $\mu$ with $1 \leq \mu \leq n$, the $\binom{n-1}{\mu-1}$ subsets
of $\mathbf{n}$ of cardinality $\mu$ containing the element $1$ contribute
$\binom{n-1}{\mu-1}\phi_{\Gamma, M\rtimes G}^\eta(\mu)\psi_{\Gamma, M\rtimes G}(n - \mu)$,
and
\[
    \psi_{\Gamma, M\rtimes G}(n)
    =
    \sum\limits_{\mu=1}^n \binom{n-1}{\mu-1}
    \phi_{\Gamma, M\rtimes G}^\eta(\mu)\psi_{\Gamma, M\rtimes G}(n - \mu)
\]
for $n \geq 1$.  Multiplying both sides by $q^{n-1}/(n-1)!$ and summing over $n \geq 1$ yields
\begin{align*}
    \sum\limits_{n\geq 1} \frac{q^{n-1}}{(n-1)!} \psi_{\Gamma, M\rtimes G}(n)
    &=
        \sum\limits_{n\geq 1} \frac{q^{n-1}}{(n-1)!}
        \sum\limits_{\mu=1}^n \binom{n-1}{\mu-1}
        \phi_{\Gamma, M\rtimes G}^\eta(\mu)\psi_{\Gamma, M\rtimes G}(n - \mu)
            \\
    &=
    \sum\limits_{n\geq 1} q^{n-1}
    \sum\limits_{\mu=1}^n \frac{1}{(\mu - 1)!(n - \mu)!}
    \phi_{\Gamma, M\rtimes G}^\eta(\mu)\psi_{\Gamma, M\rtimes G}(n - \mu),
\end{align*}
and hence
\[
    \Psi^\prime(q) = \Phi^\prime(q)\Psi(q).
\]
By $\Psi^\prime(q)$ and $\Phi^\prime(q)$, we mean the formal derivatives of the corresponding
power series.  Recalling that
$\psi_{\Gamma, M\rtimes G}(0) = 1$ and $\phi_{\Gamma, M\rtimes G}^\eta(0) = 0$,
the claim follows.
\end{proof}

As an application, we demonstrate how Theorem \ref{th-GenerFunctGammaExtensEulerSatakeCharacteristic}
follows from Theorem \ref{th-DressMuellerGen} in the case that $G$ is finite.

\begin{proof}[Alternate proof of Theorem \ref{th-GenerFunctGammaExtensEulerSatakeCharacteristic} for $G$ finite]
Combining Theorem \ref{th-DressMuellerGen} and Definition \ref{def-multInvAssocInvariants}(ii), we have
\begin{align}
    \nonumber
    &\sum\limits_{n\geq 0}q^n\chi_\Gamma^{ES}( M^n \rtimes G(\mathcal{S}_n))
        \\ \label{eq-AltProof1}
        &\quad\quad=
        \exp\left( \sum\limits_{n\geq 1}q^n
        \sum\limits_{(H): |\Gamma/H|=n}\;\; \chi_{(\Gamma/H)}^{ES} ( M \rtimes G)\right)
        \\ \nonumber
        &\quad\quad=
        \exp\left( \sum\limits_{n\geq 1} q^n
            \sum\limits_{(H): |\Gamma/H|=n} \;\;  \sum\limits_{[\tau]}\;\;
        \left(  M^{\langle\tau\rangle}\rtimes Aut_{\Gamma,\,G}^{P_\tau}  \right)\right)
\end{align}
where the sum over $[\tau]$ ranges over $HOM(H, G)/(N_{\Gamma}(H) \times G)$.
Recall from Equation \eqref{eq-CG=Aut} that $Aut_{\Gamma,\,G}^{P_\tau} \cong  C_{G({\mathcal S}_n)}(\tau)$.
As in the original proof of Theorem
\ref{th-GenerFunctGammaExtensEulerSatakeCharacteristic}, we
have by \cite[Theorem 4-2]{tamanoi2} that
the index $[Aut_{\Gamma,\,G}^{P_\tau} :C_G(\tau)]$ is
given by $|N_\Gamma^\tau(H)/H|$, which is finite.  It follows that
\[
    M^{\langle \tau\rangle}\rtimes C_G(\tau)
    \longrightarrow
    M^{\langle \tau\rangle}\rtimes Aut_{\Gamma,\,G}^{P_\tau}
\]
is an orbifold cover of $|N_\Gamma^\tau(H)/H|$ sheets, so we may rewrite Equation
\eqref{eq-AltProof1} as
\[
    \exp\left( \sum\limits_{n\geq 1}q^n
        \sum\limits_{(H): |\Gamma/H| = n}\;\;  \sum\limits_{[\tau]}\;\; \frac{1}{|N_\Gamma^\tau(H)/H|}
        \left(  M^{\langle\tau\rangle}\rtimes C_{G}(\tau) \right)\right).
\]
The $N_\Gamma(H)$-orbit of each $\tau \in HOM(H, G)$ has $|N_\Gamma(H)/N_\Gamma^\tau(H)|$ elements,
so that summing over $G$-conjugacy classes $(\tau) \in HOM(H, G)/G$ rather than
$N_\Gamma(H) \times G$-conjugacy classes $[\tau]$, we have
\[
        \exp\left( \sum\limits_{n\geq 1}q^n\
            \sum\limits_{(H): |\Gamma/H|=n}\; \frac{1}{|N_\Gamma(H)/H|} \;  \sum\limits_{(\tau)}\;\;
            \left(  M^{\langle\tau\rangle}\rtimes C_{G}(\tau) \right)\right).
\]
As each conjugacy class $(H)$ contains $|\Gamma/N_\Gamma(H)|$ elements, we rewrite this as
\begin{align*}
    &\exp\left( \sum\limits_{n\geq 1}q^n\
        \sum\limits_{H: |\Gamma/H|=n} \frac{1}{|{\Gamma}/H|}  \sum\limits_{(\tau)}\;\ \chi^{ES}
        \left(  M^{\langle\tau\rangle}\rtimes C_{G}(\tau)  \right)\right)
    \\&\quad\quad=
    \exp\left( \sum\limits_{n\geq 1}\frac{q^n}{n}\
            \sum\limits_{H: |\Gamma/H|=n}  \ \chi^{ES}_H
\left(  M^{\langle\tau\rangle}\rtimes C_{G}(\tau)  \right)\right),
\end{align*}
completing the proof.
\end{proof}

\begin{remark}
It is likely that the above approach can be used to derive formulas for numerical invariants of
wreath products of finite group quotients of quasi-projective algebraic varieties, c.f. \cite[Theorem 4]{glmold}
and \cite[Theorem 1]{glm}.
\end{remark}

\begin{remark}
The main challenge in extending this approach to the case where $G$ is an infinite compact Lie group
is that the set of $\Gamma$-$G$-bundles over a finite set $\Omega$ is no longer finite.  Hence
the functor $\mathcal{F}_{\Gamma, G}: Ens \to Ens$ would need to assign to $\Omega$
the finite set of isomorphisms classes of $\Gamma$-$G$-bundles over $\Omega$, requiring isomorphisms
that restrict to the identity on $\Omega$.  But then the natural transformation $\eta$
is no longer a weak decomposition.
\end{remark}


\section*{Acknowledgements}

The first author would like to thank the MSRI and the Center for Interdisciplinary Mathematical Sciences
Banaras Hindu University, Varanasi, India for its hospitality during the preparation of this manuscript.
The second author was supported by a Rhodes College Faculty Development Endowment Grant.


\bibliographystyle{amsplain}

\begin{thebibliography}{10}

\bibitem{ademleidaruan}
A. Adem, J. Leida, and Y. Ruan:
Orbifolds and Stringy Topology,
Cambridge Tracts in Mathematics \textbf{171},
Cambridge University Press, Cambridge, 2007.

\bibitem{planes}
M. Abreu, E.  Dryden, P.  Freitas, and L. Godinho:
\emph{Hearing the weights of weighted projective planes},
Ann. Global Anal. Geom. \textbf{33},  (2008), 373--395.

\bibitem{bfnr}
A. Bahri, M. Franz, D. Notbohm, N. Ray:
\emph{The classification of weighted projective spaces},
Fund. Math. \textbf{220}, (2013), 217--226.

\bibitem{boli}
L. A. Borisov and A. Libgober:
\emph{Elliptic genera of toric varieties and applications to mirror symmetry},
Invent. Math. \textbf{140}, (2000), 453--485.


\bibitem{bryanfulman}
J. Bryan and J. Fulman:
\emph{Orbifold {E}uler characteristics and the number of commuting $m$-tuples in the symmetric groups},
Ann. Comb. \textbf{2}, (1998), 1--6.

\bibitem{dressmullerII}
P. J.  Cameron, C. Krattenthaler and   T. M\"uller:
\emph{Decomposable Functors and the Exponential Principle II},
S\'em. Lothar. Combin. \textbf{61A} (2009/10), Art. B61Am, 38 pp.

\bibitem{dmvv}
R. Dijkgraaf, G. Moore, E. Verlinde and H.  Verlinde:
\emph{Elliptic genera of symmetric products and second quantized strings},
Comm. Math. Phys.  \textbf{185}  (1997), 197--209.

\bibitem{dixon}
L. Dixon, J. Harvey, Vafa, and E. Witten:
\emph{Strings on orbifolds},
Nucl. Phys. B \textbf{261}, (1985), 678--686.

\bibitem{dressmuller}
A. Dress and T. M\"uller:
\emph{Decomposable Functors and the Exponential Principle},
Adv. Math.  \textbf{129} (1997), 188--221.

\bibitem{farsiseaton1}
C. Farsi and C. Seaton:
\emph{Nonvanishing vector fields on orbifolds},
Trans. Amer. Math. Soc. \textbf{362} (2010), 509--535.

\bibitem{farsiseaton2}
C. Farsi and C. Seaton:
\emph{Generalized twisted sectors of orbifolds},
Pacific J. Math. \textbf{246} (2010), 49--74.

\bibitem{farsiseaton3}
C. Farsi and C. Seaton:
\emph{Generalized orbifold Euler characteristic of general orbifolds and wreath products},
Algebr. Geom. Topol. \textbf{11} (2011), 523--551.

\bibitem{dlm}
C. Dong, K. Liu,  X. Ma:
On orbifold elliptic genus, Orbifolds in mathematics and physics (Madison, WI, 2001), 87--105,
Contemp. Math., \textbf{310}, Amer. Math. Soc., Providence, RI, 2002.

\bibitem{hh}
F. Hirzebruch and T.  H\"ofer:
\emph{On the Euler number of an orbifold},
Math. Ann. \textbf{286} (1990), 255--260.

\bibitem{glmold}
S.M. Gusein-Zade, I. Luengo and A. Melle--Hern\'andez:
\emph{On the power structure over the Grothendieck ring of varieties and its applications},
Tr. Mat. Inst. Steklova \textbf{258} (2007), Anal. i Osob. Ch. 1, 58--69; translation in
Proc. Steklov Inst. Math. \textbf{258} (2007), 53--64

\bibitem{glm}
S.M. Gusein-Zade, I. Luengo and A. Melle--Hern\'andez:
\emph{Higher order generalized Euler characteristics
and generating series}, arxiv:1303.5574.

\bibitem{hambletonhausmann}
I. Hambleton and  J. C. Hausmann:
\emph{Equivariant bundles and isotropy representations},
Groups Geom. Dyn. \textbf{4} (2010), 127--162.

\bibitem{hao}
A. Hattori:
\emph{Orbifold elliptic genera and rigidity}.
J. Math. Soc. Japan \textbf{58} (2006), 419--452.

\bibitem{hama}
A. Hattori and M. Masuda:
\emph{Elliptic genera, torus orbifolds and multi-fans}, Intern. J. Math., \textbf{16} (2005), 957--998.

\bibitem{holm}
T. Holm:
\emph{Orbifold cohomology of abelian symplectic reductions and the case of weighted projective spaces},
Poisson geometry in mathematics and physics, 127--146,
Contemp. Math., \textbf{450}, Amer. Math. Soc., Providence, RI, 2008.

\bibitem{husemoller}
D. Husemoller:
\emph{ Fibre Bundles}, Third edition,
Graduate Texts in Mathematics, \textbf{20}, Springer--Verlag, New York, 1994.

\bibitem{jiang}
Y. Jiang:
\emph{The Chen-Ruan cohomology of weighted projective spaces,}
Canad. J. Math. \textbf{59} (2007), 981--1007.

\bibitem{kawasaki}
T. Kawasaki:
\emph{Cohomology of twisted projective spaces and lens complexes},
Math. Ann. \textbf{206} (1973), 243--248.

\bibitem{lashof1}
R. Lashof:
\emph{Equivariant bundles},
Illinois J. Math. \textbf{26} (1982), 257--271.

\bibitem{moerdijkmrcun}
I. Moerdijk and J. Mr\v{c}un:
Introduction to Foliations and {L}ie groupoids,
Cambridge Studies in Advanced Mathematics \textbf{91},
Cambridge University Press, Cambridge, NY, 2003.

\bibitem{oh}
T. Ohmoto:
\emph{Generating functions of orbifold Chern classes. I. Symmetric products},
Math. Proc. Cambridge Philos. Soc.  \textbf{144}  (2008), 423--438.

\bibitem{tamanoi1}
H. Tamanoi:
\emph{Generalized orbifold {E}uler characteristic of
symmetric products and equivariant {M}orava $K$-theory},
Algebr. Geom. Topol. \textbf{1} (2001), 115--141.

\bibitem{tamanoi2}
H. Tamanoi:
\emph{Generalized orbifold {E}uler characteristic of symmetric orbifolds and covering spaces},
Algebr. Geom. Topol. \textbf{3} (2003), 791--856.

\bibitem{tamanoi3}
H. Tamanoi:
\emph{Infinite product decomposition of orbifold mapping spaces},
Algebr. Geom. Topol. \textbf{9} (2009), 569--592.

\bibitem{thurston}
W. Thurston: The Geometry and Topology of $3$-Manifolds,
Lecture Notes, Princeton University Math Dept., Princeton, New Jersey (1978).

\bibitem{wang}
W. Wang:
\emph{Equivariant $K$-theory, wreath products, and Heisenberg algebra},
Duke Math. J. \textbf{103} (2000), 1--23.

\bibitem{wangZhou}
W. Wang and J. Zhou:
\emph{Orbifold Hodge numbers of the wreath product orbifolds},
J. Geom. Phys. \textbf{38} (2001), 152-–169.

\end{thebibliography}

\end{document}